 \numberwithin{equation}{section}
\newtheorem{theorem}{Theorem}[section]
\newtheorem{lemma}[theorem]{Lemma}
\newtheorem{proposition}[theorem]{Proposition}
\newtheorem{example}[theorem]{Example}
\numberwithin{equation}{section}
\font\math=msbm10
\def\u#1{\hbox{\math#1}}
\def\vZ{{\u Z}}
\def\nin{\noindent}
\def\a{{\alpha}}
\def\b{{\beta}}
\def\g{{\gamma}}
\def\d{{\delta}}
\def\m{{\mu}}
\def\f{{\phi}}
\def\s{{\sigma}}
\def\t{{\tau}}
\def\w{{\omega}}
\def\AA{{\mathcal A}}
\def\BB{{\mathcal B}}
\def\CC{{\mathcal C}}
\def\DD{{\mathcal D}}
\def\FF{{\mathcal F}}
\def\GG{{\mathcal G}}
\def\HH{{\mathcal H}}
\def\al{{\aleph}}
\def\fg{finitely generated}
\def\fr{finite rank}
\def\tf{torsion-free}
\def\rk{\mathop{\rm rk}}
\def\fg{finitely generated}
\def\cd{completely decomposable}
\def\fd{finitely decomposable}
\def\cwac{continuous well-ordered ascending chain }
\def\fr{finite rank}
\def\tf{torsion-free}
\def\vd{valuation domain}
\def\Dds{Dedekind domains}
\def\Pd{Pr\"ufer domain}
\def\Pds{Pr\"ufer domains}
\def\<{\langle}
\def\>{\rangle}
\long\def\alert#1{\smallskip{\hskip\parindent\vrule%
\vbox{\advance\hsize-2\parindent\hrule\smallskip\parindent.4\parindent%
\narrower\noindent#1\smallskip\hrule}\vrule\hfill}\smallskip}
\begin{document}

\title{On Completely Decomposable and Separable Modules over Pr\"ufer Domains}

\author{L\'aszl\'o Fuchs}
\address{Department of Mathematics, Tulane University, New Orleans, Louisiana 70118, USA}
\email{fuchs@tulane.edu}

\author{Jorge E. Mac\'{\i}as-D\'{\i}az}
\address{
Departamento de Matem\'aticas y Fisica, Universidad Aut\'onoma de Aguascalientes, Blvd. Universidad 940, Ciudad 
Universitaria, Aguascalientes, Ags. 20100, Mexico}
\email{jemacias@correo.uaa.mx}

\subjclass{ primary: 13C13, 13C05; secondary: 13F05.}

\keywords{\tf\ module, \cd, \fd\ and separable module, homogeneously decomposable, type of a rank 1 module, pure, RD- and RD$^*$-submodule, $H(\al_0)$- and  $G(\al_0)$-family of submodules. \cwac\ of modules, $h$-local \Pd.}

\maketitle

 \begin{abstract}   We generalize  known results on summands of \cd\  and separable \tf\  abelian groups to modules over $h$-local \Pd s.  Over such domains summands of \cd\ \tf\ modules  are again \cd\  (Theorem \ref{cd}) and summands of   separable \tf\  modules are likewise separable (Theorem \ref{main}).  In addition, a Pontryagin-Hill type theorem is established on countable chains of homogeneous \cd\ modules over $h$-local \Pd s (Theorem \ref{mainth}).
 
 Several auxiliary results are proved for modules over integral domains that are direct sums of finite or countable rank submodules.
 \end{abstract}

\section {Introduction}  \medskip

All modules in this note are \tf\ modules over integral domains $R$. 

 By a {\it \cd} \tf\ module $M$ is meant a direct sum of rank 1 modules, i.e. of modules that are $R$-isomorphic to submodules of the field $Q$ of quotients of $R$.  The cardinal number of the set of summands is called the {\it rank} of $M$, in notation: $\rk M$. This is an invariant of $M$:  the cardinality of every maximal independent set in $M$.

By making use of results by Olberding \cite{O},  recently Goeters \cite{G} proved that over an $h$-local \Pd\ $R$ summands of finite rank \cd\ \tf\ modules are again \cd.  In Theorem \ref{cd} we extend this theorem to modules
of arbitrary ranks.  Our approach is different from Goeters inasmuch as we rely on results by Kolettis \cite{Ko} on homogeneously decomposable \tf\ modules. Our theorem generalizes the celebrated Baer-Kulikov-Kaplansky theorem on summands of \cd\ abelian groups (e.g. Fuchs \cite[Theorem 86.7]{F}).

We also generalize  an old  result on abelian groups stating that summands of separable \tf\ groups are again separable (see e.g. Fuchs \cite[Theorem 87.5]{F}).  Theorem \ref{main} asserts that summands of  separable \tf\ modules over an $h$-local \Pd\ $R$ are again separable.  The proof is {\sl via} reduction to the \cd\ case.

Hill \cite{H} established  a far-reaching generalization of Pontryagin's criterion on the freeness of abelian groups by proving that the union of a countable ascending chain of pure free subgroups (of any size) is likewise free. This theorem is extended here to countable chains of homogeneous \cd\ modules over  $h$-local \Pd s  (Theorem \ref{mainth}).  The hypothesis of purity had to be  strengthened:  we assume that countable rank RD-submodules in the factors of the chain  can be obtained as  images of countable rank submodules from the links of the chain (they are called   RD$^*$-submodules) | a condition that is automatically satisfied whenever $R$ is a countable domain.

We also show that there is a \cwac\  with countable rank factors consisting of \cd\ RD-submodules between a \cd\  module and a \cd\ RD$^*$-submodule  | a fact that underlines the importance played by countability in the theory of \cd\ modules.  (This phenomenon was first observed by Dugas-Rangaswamy \cite{DR} for abelian groups.)

Some of our results are proved under more general conditions than needed for our main results: for direct sums of  finite or countable rank modules (rather than just for direct sums of rank 1 modules).  Besides their independent interest, their proofs also reveal  basic ideas on which the results rely.

We wish to thank the referee for his/her valuable comments.


\section {Preliminaries}  \medskip

Let $M$ be  any  module over the domain $R$. Following P. Hill, we define various families of submodules (see also Fuchs-Salce \cite{FS}). 

By an  {\it $H(\al_0)$-family} in $M$ is meant a
collection $\HH$  of submodules of $M$ satisfying the following properties:

H1. $0, M \in \HH$; 

H2. $\HH$ is closed under unions, i.e. $M_i \in \HH \ (i \in I)$
implies 
$\sum_{i \in I} M_i \in \HH$ for any index set $I$;

H3.  if $C \in \HH$ and $X$ is any countable subset of $M$,
 then there is a submodule $B \in \HH$ that contains both $C$ and
$X$  and is such that $B/C$ is countably generated. 
 
 A {\it $G(\al_0)$-family} $\GG$ is defined similarly with H2
replaced by the following weaker condition:

G2. $\GG$ is closed under unions of chains.
 
 In this paper we are interested in the rank versions of these families. The
{\it $H^*(\al_0)$-family} and the {\it $G^*(\al_0)$-family} are defined similarly for \tf\ modules $M$ (see Rangaswamy \cite{R}): in these cases  the submodules in the families are required to be RD-submodules and in condition H3  `countable rank' is to be used in place  of `countably generated.'   (Recall: a relatively divisible or briefly an RD-submodule  of $M$ is a submodule $N$ satisfying $rN = N \cap rM$ for all $r \in R$.)

Obviously, every $H^*(\al_0)$-family is a $G^*(\al_0)$-family, but
the converse fails in general.   Note that every \tf\ $R$-module $M$ has a  $G^*(\al_0)$-family of RD-submodules. 
  In fact, select a maximal independent set $X$ in $M$. For a subset $Y$ of $X$, let $M_Y$ denote the smallest RD-submodule of $M$ that contains $Y$. It is readily checked that the set of all $M_Y$ is a $G^*(\al_0)$-family. However, this is in general not an $H^*(\al_0)$-family, since the sum of  two RD-submodules need not be an RD-submodule.

 If the $R$-module $M$ is a direct sum of submodules of countable rank, and $M= \oplus_{\a \in I} A_\a$ with $\rk A_\a \le \al_0$ is such a decomposition for an index set $I$, then the standard way of defining an $H^*(\al_0)$-family $\HH$ of summands in $M$ is to consider the set of all partial summands in this decomposition: $H_J= \oplus_{\a \in J} A_\a$ with $J$ ranging over all subsets of $I$. 

It is well known (and is easy to check) that the intersection of a finite number of (or of even countably many)  $G^*(\al_0)$-families is again such a family.  The same holds for $H^*(\al_0)$-families.
\medskip

Next we introduce a new concept that  will be needed in the sequel, strengthening the RD-property of submodules.

Let $A$ be a submodule of the \tf\ $R$-module $M$, and $\f: M \to M/A$ the canonical map.  We say that $A$ is an {\it strong RD$^*$-submodule} of $M$ if 

1) it is an RD-submodule, and 

2) each  finite (and hence countable) rank submodule in $M/A$ has a countable rank preimage in $M$. 

For the sake of comparison  let us point out  that the RD-submodule $A$ is   {\it balanced} in $M$ if  every  rank one submodule in $M/A$ has a  rank one preimage in $M$. Thus the property of being `RD*' lies between `RD' and `balancedness'.

In the following list (a)-(d),  $A , B$ will denote RD-submodules of the \tf\ $R$-module $M$ such that  $A \le B$. It is straightforward to verify that 

(a) direct summands and balanced submodules are RD*-submodules;

(b)  if $A$ is an RD*-submodule of $M$, then it is RD* in $B$ as well;  

(c) the property 'RD*'  is a transitive relation: if $A$ is an RD*-submodule of $B$ and  $B$ is an RD*-submodule of $M$, then $A$ is an RD*-submodule of $M$;  

(d) let $A$ be an RD*-submodule of $M$;  then $B$ is an RD*-submodule in $M$ if and only if $B/A$ is an RD*-submodule of $M/A$.  

   \begin{example} \rm Suppose that there exists an uncountably generated rank one \tf\ $R$-module $A$ (e.g. an uncountably generated field of quotients of certain \Dds).  If $0 \to H \to F \to A \to 0$ is a free presentation of $A$, then $H$ is RD, but not RD* in $F$.
  \end{example}

\begin{example} \rm It is easy to see that the concept of RD$^*$-submodule is new only if $R$ is uncountable, because if $R$ is a countable domain, then all RD-submodules are automatically RD*-submodules.  In fact,  if $A$ is RD in $M$ and $\f: M \to M/A$ is the canonical map, then every countable rank submodule of $M/A$ is countably generated, and the generators are included in $\f C$ for some countable rank submodule $C$ of $M$. 
  \end{example} 
  
  Next we prove an easy result. 
 
 \begin{lemma} \label{RD*} If $A$ is an RD$^*$-submodule of the \tf\  $R$-module $M$, then for every $G^*(\al_0)$-family $\CC$ of RD-submodules in $M/A$,  $M$ admits a $G^*(\al_0)$-family $\GG$ of RD-submodules such that $$\CC= \{ \f B \ | \ B \in \GG\},$$ 
 where $\f$ denotes the canonical projection $M \to M/A$. \end{lemma}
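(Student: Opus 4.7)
The plan is to take $\GG$ to be the collection of all RD-submodules $B$ of $M$ such that $\phi B\in\CC$, and to verify the three axioms defining a $G^*(\al_0)$-family.  The identity $\phi\GG=\CC$ is immediate: for each $C\in\CC$ the preimage $\phi^{-1}(C)$ is an RD-submodule of $M$ (a routine computation using that $A$ is RD in $M$ and $C$ is RD in $M/A$), and $\phi(\phi^{-1}(C))=C$.  Axiom G1 is trivial, since $0$ and $M$ map to $0,M/A\in\CC$.  For G2, the union of a chain $\{B_i\}$ in $\GG$ is RD in $M$ (the chain union of RD-submodules of a torsion-free module is RD) and $\phi(\bigcup_i B_i)=\bigcup_i\phi B_i\in\CC$ by G2 for $\CC$.

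The substantive step is H3, and this is precisely where the RD$^*$ hypothesis enters.  Given $B\in\GG$ with $C:=\phi B$ and a countable $X\subseteq M$, apply H3 to $\CC$ to produce $C'\in\CC$ with $C\le C'$, $\phi X\subseteq C'$, and $C'/C$ of countable rank.  Choose a countable-rank submodule $\overline D\le C'$ with $\overline D+C=C'$ and $\phi X\subseteq\overline D$; this is possible because $C'/C$ is of countable rank.  By RD$^*$, the countable-rank submodule $\overline D$ of $M/A$ admits a countable-rank preimage $Y\le M$ with $\phi Y=\overline D$.  For each $x\in X$ pick $y_x\in Y$ with $\phi y_x=\phi x$, set $a_x:=x-y_x\in A$, and define
\[
B' := \text{the RD-closure in } M \text{ of } B + Y + \sum_{x\in X} Ra_x.
\]
Then $B'$ is RD, contains $B$ and $X$ (since $x=y_x+a_x$), and the quotient $B'/B$ has countable rank: passing to the torsion-free module $M/B$ via the correspondence between RD-submodules of $M$ containing $B$ and RD-submodules of $M/B$, it coincides with the RD-closure of a countably generated submodule, and RD-closures preserve rank in the torsion-free setting.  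Finally, $\phi B'\subseteq C'$ because $\phi^{-1}(C')$ is RD and contains all the chosen generators, while $\phi B'\supseteq\phi(B+Y)=C+\overline D=C'$, yielding $\phi B'=C'\in\CC$ and therefore $B'\in\GG$.

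The main obstacle is the combined step at the end: selecting $\overline D\le C'$ of countable rank with $C+\overline D=C'$ on the one hand, and invoking RD$^*$ to lift it to a countable-rank submodule $Y$ of $M$ on the other.  Without the RD$^*$ hypothesis, the preimage of $\overline D$ in $M$ could have uncountable rank (due to uncontrolled contributions from $A$), and then $B'/B$ would fail the countable-rank condition of H3.  The RD$^*$ hypothesis is exactly what allows countable-rank structure in $M/A$ to be witnessed by countable-rank structure in $M$, and it is this compatibility that makes the lifted family $\GG$ satisfy H3.
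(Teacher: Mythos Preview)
Your overall strategy---defining $\GG$ as the RD-submodules $B$ of $M$ with $\phi B\in\CC$---matches the paper's, and your verifications of H1, G2, and the surjectivity $\phi\GG=\CC$ are correct.  The problem lies in your H3 argument, specifically the sentence ``Choose a countable-rank submodule $\overline D\le C'$ with $\overline D+C=C'$ \dots; this is possible because $C'/C$ is of countable rank.''  That claim is exactly the assertion that $C$ is RD$^*$ in $C'$, and nothing in the hypotheses guarantees it: you only know that $A$ is RD$^*$ in $M$, not that arbitrary members of $\CC$ are RD$^*$ in one another.  To see the claim can fail, take $A=0$ (trivially RD$^*$), $M=F$ free, and $\CC$ the family of all RD-submodules of $F$.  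With $J$ a rank-one torsion-free module that is not countably generated and $0\to H\to F\to J\to 0$ a free presentation (this is the paper's own Example~2.1), set $C=H$ and $C'=F$; then $C'/C\cong J$ has rank $1$, yet any countable-rank $\overline D\le F$ lies in $R^{(I_0)}$ for a countable $I_0$ and hence maps onto a countably generated submodule of $J$, never onto $J$ itself.  So $\overline D+C=C'$ is impossible here, and your construction of $B'$ does not get off the ground.

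The standard remedy is a countable back-and-forth: alternate between enlarging on the $M$ side (take RD-hulls to absorb $X$ and successive lifts) and on the $M/A$ side (apply H3 for $\CC$ to swallow the image so far), using the RD$^*$ hypothesis at each stage to lift countable-rank pieces of $M/A$ to countable-rank submodules of $M$.  After $\omega$ steps the union $B'=\bigcup_n B_n$ is RD in $M$ with $B'/B$ of countable rank, and $\phi B'=\bigcup_n C_n\in\CC$ by G2.  The paper's proof simply asserts ``it is readily seen,'' but the one-shot construction you wrote down does not suffice; the iteration is what makes the RD$^*$ hypothesis do its work.
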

 
 \begin{proof} Let $\FF$ be the $G^*(\al_0)$-family of RD-submodules of $M$ and $\CC$  a $G^*(\al_0)$-family of RD-submodules in $M/A$,  Define $\GG = \{B \in \FF \ | \ \f B \in \CC\}$ where $\f: M \to M/A$ denotes the canonical projection. 
 It is readily seen that  $\GG$ is as desired.
 \end{proof}

We say that two \tf\  $R$-modules, $A$ and $B$, are {\it quasi-isomorphic} (see Goeters \cite{G}) if there exist submodules $A' \le A$ and $B' \le B$ such that $A' \cong B$ and $B' \cong A$. Quasi-isomorphism is evidently an equivalence relation on \tf\ $R$-modules. 

The equivalence classes of rank 1 \tf\ $R$-modules under quasi-isomorphism are called {\it types}.  The type of a rank 1 \tf\ module $M$ is denoted by the symbol $\t(M)$. The set of types admits a natural partial order: for types $\s$ and $\t$ we set $\s \le \t$ if and only if there exist rank 1 $R$-modules  $A  $ and $B  $ with $\t(A)=\s$ and $\t(B)=\t$ such that $A$ is a submodule of $B$. The smallest type is the common type of all fractional ideals of $R$, while the largest type is the type of $Q$, the field of quotients of $R$.

Just as for abelian groups, with a given type $\t$ one can associate two fully invariant submodules, $M(\t)$ and $M^*(\t)$, of a \tf\ $R$-module $M$ as follows:
$$M(\t)= \sum \{X \ | \ X \le M; \t(X) \ge \t\}$$ 
and 
$$M^*(\t)= \sum \{X \ | \ X \le M; \t(X) > \t\} $$ 
where $X$ stands for rank one submodules.  From the definition it is clear that they are submodules of $M$ such that $M(\t) \ge M^*(\t)$; furthermore, $M(\s) \le M(\t)$ and $M^*(\s) \le M^*(\t)$ whenever $\s \ge \t$. 

A \tf\ module $H$ will be called {\it homogeneous of type} $\t$ if $H(\t) =H$ and $H^*(\t)=0$.  Evidently, RD-submodules of homogeneous \tf\ modules are again homogeneous. Projective modules as well as divisible \tf\ modules are homogeneous, and so are direct sums of fractional ideals of $R$. 

Kolettis \cite{Ko} calls a \tf\ module $M$ {\it homogeneously decomposable} if it is a direct sum of homogeneous modules (of equal or different types). He proves that a \tf\ module $M$ of countable rank is homogeneously decomposable if and only if it satisfies the following two conditions: (i) for every type $\t$, both $M(\t)$ and $M^*(\t)$ are summands of $M$; and (ii) every element of $M$ belongs to a direct summand of $M$ that is a finite direct sum of homogeneous modules. Using this characterization, he proves:

\begin{theorem} {\rm (Kolettis \cite{Ko})} \label{K} Summands of a homogeneously decomposable \tf\  $R$-module are themselves homogeneously decomposable.  \qedsymbol
\end{theorem}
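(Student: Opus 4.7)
The plan is to verify Kolettis's characterization (conditions (i) and (ii) quoted just above the theorem) for any direct summand $A$ of $M = A \oplus B$, after reducing to the countable-rank case. A standard Kaplansky-type argument using an $H^*(\al_0)$-family of direct summands of $M$ simultaneously compatible with the given decomposition $M = \bigoplus_{i \in I} H_i$ (each $H_i$ homogeneous) and with the splitting $A \oplus B$ exhibits $A$ as a direct sum of countable-rank summands, each of which is a direct summand of a countable-rank homogeneously decomposable summand of $M$. This reduces the theorem to the countable-rank case, where Kolettis's characterization is available.

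Condition (i) for $A$ rests on the observation that if $F$ is a fully invariant direct summand of $M$, then $A \cap F$ is a direct summand of $A$. Indeed, letting $\pi_F: M \to F$ be the projection along any complement of $F$ and $\sigma_A: M \to A$ the projection along $B$, the composite $a \mapsto \sigma_A(\pi_F(a))$ sends $A$ into $A \cap F$ (by full invariance, $\sigma_A(F) \subseteq F$) and is the identity on $A \cap F$; hence it is a retraction. Applied to the fully invariant summands $F = M(\t)$ and $F = M^*(\t)$, which are summands of $M$ by (i) for $M$, this yields that $A(\t)$ and $A^*(\t)$ are direct summands of $A$ for every type $\t$.

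The harder half is condition (ii): for each $a \in A$, produce a direct summand of $A$ that is a finite direct sum of homogeneous modules and contains $a$. Since (ii) holds for $M$, pick a direct summand $H = H_1 \oplus \cdots \oplus H_n$ of $M$ with $a \in H$ and each $H_j$ homogeneous of type $\t_j$. I would argue by induction on the number of distinct types involved. Let $\t'$ be a maximal element of $\{\t_1, \dots, \t_n\}$; condition (i) for $A$ then splits $A(\t') = A^*(\t') \oplus C$, and $C \cong A(\t')/A^*(\t')$ is homogeneous of type $\t'$. Decompose $a = c + a'$ accordingly: $c \in C$ is the component of $a$ of maximal type, and $a'$ lies in a summand of $A$ complementary to $C$ in which only the strictly fewer types $\t_j \ne \t'$ appear. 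The inductive hypothesis produces a finite direct sum of homogeneous summands of $A$ containing $a'$, and appending $C$ yields the required summand of $A$ containing $a$. The principal obstacle is justifying this peeling step—specifically, choosing the complement $C$ so that the relevant component of $a$ actually lies in $C$ and the remainder $a'$ really is supported only on the remaining types $\t_j \ne \t'$; this uses the finite-rank homogeneously decomposable structure of $H$ together with the behaviour of the fully invariant filtration $M^*(\t') \subseteq M(\t')$ on $H$. Once (i) and (ii) are established for $A$, Kolettis's characterization gives that $A$ is homogeneously decomposable.
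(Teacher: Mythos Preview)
The paper does not supply a proof of this theorem: it is quoted from Kolettis \cite{Ko} and marked with \qedsymbol\ as a cited result. So there is no in-paper argument to compare your proposal against; you are attempting to reconstruct Kolettis's proof from the countable-rank characterization sketched just before the statement.

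Your reconstruction has a genuine gap at the reduction to countable rank. The Kaplansky-type argument (and the rank version the paper invokes elsewhere) needs $M$ to be a direct sum of \emph{countable-rank} submodules, so that partial sums furnish an $H^*(\al_0)$-family of summands. But ``homogeneously decomposable'' only gives $M = \bigoplus_i H_i$ with each $H_i$ homogeneous, and a homogeneous module may have uncountable rank and need not itself split into countable-rank summands. There is thus no evident $H^*(\al_0)$-family of summands compatible with the homogeneous decomposition, and the reduction does not go through as stated.

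The second gap you flag yourself: in the inductive peeling for condition (ii), after splitting off the homogeneous summand $C$ at a maximal type $\t'$, nothing forces the residual component $a'$ to be supported only on the remaining types $\t_j \ne \t'$. A complement to $C$ in $A$ can carry elements of type $\t'$ and of types foreign to $\{\t_1,\dots,\t_n\}$, so the induction hypothesis is not available for $a'$ without a further device tying the decomposition of $a$ back to the finite summand $H$ of $M$. (There is also a smaller unremarked step in your argument for (i): the retraction shows $A\cap M(\t)$ is a summand of $A$, but the identification $A\cap M(\t)=A(\t)$ still has to be checked.)
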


\bigskip

\section {Summands of completely decomposable modules} \medskip

We repeat the definition: a \tf\ $R$-module $C$ is   {\it completely decomposable} if it is the direct sum of rank 1 submodules. Such a $C$ is homogeneous if it is the direct sum of quasi-isomorphic rank 1 modules.  It is   clear that completely decomposable modules are homogeneously decomposable.

In the study of completely decomposable modules it is crucial what happens in the finite rank case.  It is a classical theorem by R. Baer \cite{B} that a finite rank completely decomposable homogeneous abelian group  has the distinguished property that every pure (i.e. RD-)subgroup is a summand, and hence it is likewise completely decomposable. This is not true in general, not even for projective modules. Olberding \cite{O} proved  that this property  is shared by $h$-local \Pd s $R$  (recall: a domain $R$ is $h$-{\it local} if every non-zero element belongs only to finitely many maximal ideals, and every non-zero prime ideal is contained only in a single maximal ideal), moreover:

\begin{theorem} {\rm (Olberding \cite{O})} \label{finite} The following are equivalent for any integral domain $R$:

{\rm (a)} $R$ is an $h$-local \Pd;

{\rm (b)} every pure submodule of a finite rank completely decomposable homogeneous \tf\ module is a summand;

{\rm (c)} every pure submodule of a finite direct sum of fractional ideals is a summand.  
 \qedsymbol
\end{theorem}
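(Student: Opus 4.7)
My plan is to prove the equivalences cyclically as $(a) \Rightarrow (b) \Rightarrow (c) \Rightarrow (a)$.  The easy link is $(b) \Rightarrow (c)$: since the preliminaries record that every fractional ideal of $R$ carries the smallest type, a finite direct sum of fractional ideals is a finite rank homogeneous \cd\ module, so (b) applies verbatim.

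For $(a) \Rightarrow (b)$, the substantive implication, let $M = A_1 \oplus \cdots \oplus A_n$ be homogeneous of type $\t$ with $P \le M$ pure.  I would localize at each maximal ideal $\mathfrak{m}$: because $R$ is Pr\"ufer, $R_\mathfrak{m}$ is a valuation domain, and since homogeneity of type is preserved under localization and any two rank one \tf\ modules of a given type over a valuation domain are mutually isomorphic, $M_\mathfrak{m}$ reduces to a free module of rank $n$ over a rank one representative $J \subseteq Q$ of the type.  Over a valuation domain, the uniserial structure of rank one modules allows one to peel off rank one summands from any finite rank pure submodule, yielding a local splitting $M_\mathfrak{m} = P_\mathfrak{m} \oplus C_\mathfrak{m}$.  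The full strength of $h$-locality now enters via Matlis's global-to-local decomposition: the conditions that each non-zero element lies in only finitely many maximals, and that each non-zero prime is contained in a unique maximal, together permit the local complements $C_\mathfrak{m}$ to be assembled into a global complement $C$ with $M = P \oplus C$.

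For $(c) \Rightarrow (a)$ I would derive the two halves of the conclusion separately.  To obtain the Pr\"ufer property, let $I \subseteq R$ be finitely generated and consider a presentation $0 \to K \to R^n \to I \to 0$.  Since $I$ embeds in the domain $R$ it is \tf, so $K$ is a pure submodule of the finite direct sum of fractional ideals $R^n$; by (c) it splits off, whence $I$ is projective and therefore invertible.  Thus $R$ is Pr\"ufer.  For $h$-locality I would argue contrapositively: assuming $R$ is Pr\"ufer but fails $h$-locality, I would construct a pure submodule of a suitable $I \oplus J$ of fractional ideals that cannot split, the natural candidate being a diagonal-type submodule built either from a non-zero element lying in infinitely many maximals, or from a non-zero prime contained in two distinct maximals.

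The main obstacle is the patching step in $(a) \Rightarrow (b)$ together with its mirror construction in $(c) \Rightarrow (a)$.  Local complements exist one maximal at a time, but assembling them into a single global idempotent endomorphism of $M$ requires both halves of the $h$-local Pr\"ufer hypothesis together, not either one alone; correspondingly, the non-splitting example in $(c) \Rightarrow (a)$ must exploit the precise failure mode of $h$-locality to witness a pure submodule with no complement.  This compatibility-versus-obstruction dichotomy is the technical heart of the argument.
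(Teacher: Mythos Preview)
The paper does not prove this theorem at all: it is quoted verbatim from Olberding \cite{O} and closed with a bare \qedsymbol, with only the remark afterward that in (b) and (c) ``pure'' may be replaced by ``RD''. There is therefore no argument in the present paper to compare your sketch against; the authors treat Olberding's result as a black box and build Theorems \ref{cd} and \ref{mainth} on top of it.

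Your outline has the right architecture---the cycle $(a)\Rightarrow(b)\Rightarrow(c)\Rightarrow(a)$, with $(b)\Rightarrow(c)$ trivial since fractional ideals share the smallest type, the Pr\"ufer half of $(c)\Rightarrow(a)$ via projectivity of finitely generated ideals, and the identification of the local-to-global patching as the crux of $(a)\Rightarrow(b)$. Two cautions if you intend to flesh this out rather than cite Olberding: first, your claim that rank one modules of a fixed type over a valuation domain are mutually \emph{isomorphic} is too strong as stated (type is only a quasi-isomorphism invariant), so the local splitting over $R_{\mathfrak m}$ needs a more careful justification than reducing $M_{\mathfrak m}$ to a free $J$-module; second, the contrapositive construction for the $h$-local half of $(c)\Rightarrow(a)$ is exactly the delicate part of Olberding's paper, and ``a diagonal-type submodule'' is a gesture rather than a proof. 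For the purposes of the present paper none of this matters---simply citing \cite{O} is what the authors do and is entirely appropriate.
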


It is easy to see that in conditions (b) and (c) `pure submodule' can be replaced by `RD-submodule' (this strengthens the hypothesis of the difficult implication (c) $\Rightarrow$ (a)). It also follows at once that the summands in (b) and (c) are then completely decomposable. 

Using Olberding's theorem,  Goeters \cite{G}  proved that summands of finite rank completely decomposable \tf\ modules over $h$-local \Pd s are again completely decomposable.  Our present goal is to extend this result to completely decomposable modules of arbitrarily high ranks and to verify the analogue for separable modules (see next section).   We call a \tf\ $R$-module $M$   {\it separable} (in the sense of  Baer \cite{B})  if  1) every finite set of its elements can be embedded in a finite rank summand of $M$, and 2) \fr\ summands of $M$ are \cd. (This is a slightly stronger definition than  the one used in Fuchs-Salce \cite[Chapter XVI, section 5]{FS}.) 

Accordingly, we are now going to prove:

\begin{theorem} \label{cd} Summands of completely decomposable \tf\ modules over $h$-local \Pd s are likewise completely decomposable.
\end{theorem}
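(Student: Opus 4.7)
The plan mirrors the classical Baer-Kulikov-Kaplansky strategy, adapted to $h$-local \Pd s. First reduce to the homogeneous case via Kolettis' theorem (Theorem \ref{K}), then build a continuous well-ordered ascending chain of summands of $N$ using an $H^*(\al_0)$-family of \emph{compatible} subsets of the indexing set of a \cd\ decomposition of $M$, with countable rank \cd\ jumps.

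\textbf{Reduction to the homogeneous case.} By Theorem \ref{K}, the summand $N$ of the (homogeneously decomposable) module $M$ is itself homogeneously decomposable: $N = \bigoplus_\tau N_\tau$ with $N_\tau$ of type $\tau$. Grouping the rank $1$ summands of $M = \bigoplus R_i$ by type, one checks $M(\tau) = \bigoplus_{\t(R_i) \ge \tau} R_i$ and $M^*(\tau) = \bigoplus_{\t(R_i) > \tau} R_i$, so $M(\tau)/M^*(\tau)$ is $\tau$-homogeneous \cd. Since projections preserve types, the splitting $M = N \oplus N'$ respects $M(\tau)$ and $M^*(\tau)$, and one obtains $N_\tau \cong N(\tau)/N^*(\tau)$ as a summand of $M(\tau)/M^*(\tau)$. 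So it suffices to prove the theorem when $M$ is $\tau$-homogeneous.

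\textbf{Compatible subsets and the chain.} Assume $M = \bigoplus_{i \in I} R_i$ with each $R_i$ of type $\tau$, and $M = N \oplus N'$. Call $J \subseteq I$ \emph{compatible} if $M_J := \bigoplus_{i \in J} R_i$ satisfies $M_J = (M_J \cap N) \oplus (M_J \cap N')$; equivalently, $\pi_N(M_J), \pi_{N'}(M_J) \subseteq M_J$. The key technical claim is that the compatible subsets form an $H^*(\al_0)$-family: given a countable rank submodule $X \subseteq M$, begin with a countable $J_0$ covering the (finite) supports of a countable maximal independent set of $X$, and iteratively set $J_{n+1} = J_n \cup \bigcup_z \mathop{\rm supp}(z)$, where $z$ ranges over a countable maximal independent set of the countable-rank submodule $\pi_N(M_{J_n}) + \pi_{N'}(M_{J_n})$; then $J_\omega = \bigcup_n J_n$ is countable and $M_{J_\omega}$ compatible. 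A standard transfinite construction now produces a continuous well-ordered ascending chain $0 = J_0 \subset J_1 \subset \dots \subset J_\kappa = I$ of compatible subsets with $|J_{\alpha+1} \setminus J_\alpha| \le \al_0$. Setting $N_\alpha = M_{J_\alpha} \cap N$, compatibility implies $N_\alpha$ is a summand of $N_{\alpha+1}$ and $N_{\alpha+1}/N_\alpha$ is a summand of $M_{J_{\alpha+1}}/M_{J_\alpha} \cong M_{J_{\alpha+1} \setminus J_\alpha}$, a countable-rank \cd\ module. Picking complements $N_{\alpha+1} = N_\alpha \oplus P_\alpha$ gives $N = \bigoplus_\alpha P_\alpha$, reducing to the countable rank case.

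\textbf{Countable rank case and main obstacle.} For countable rank $M$ the same strategy works with finite rank jumps: the RD-closure in $N$ of any finite rank $A \subseteq N$ is again finite rank, sits inside a finite rank partial sum $M_J$, and by Theorem \ref{finite} is a \cd\ summand of $M_J$ — hence a \cd\ summand of $N$; enumerating a maximal independent set of $N$ and iterating RD-closures expresses $N$ as a countable direct sum of finite rank \cd\ modules. The main obstacle I expect is the back-and-forth closure argument producing \emph{countable} compatible subsets (delicate because the individual $R_i$ need not be countably generated when $R$ is uncountable), together with verifying that the compatibility of $J_\alpha \subset J_{\alpha+1}$ really forces the direct sum decomposition $M_{J_{\alpha+1}}/M_{J_\alpha} \cong N_{\alpha+1}/N_\alpha \oplus N'_{\alpha+1}/N'_\alpha$ — this is what realizes $N_{\alpha+1}/N_\alpha$ as a summand of a \cd\ module and makes the chain argument run.
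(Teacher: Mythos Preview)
Your argument is correct and lands in the same place as the paper's proof---the countable rank homogeneous case handled via Olberding's Theorem \ref{finite}---but you take a longer road to get there. The paper dispatches the reduction to countable rank in one line by citing Kaplansky's theorem (summands of direct sums of countable rank modules are again such direct sums), and only then invokes Kolettis to reduce to the homogeneous case; there is no need to build an $H^*(\al_0)$-family of compatible subsets, because that back-and-forth closure is precisely the engine inside Kaplansky's proof. In effect you are re-proving Kaplansky's theorem in situ. Your version has the virtue of being self-contained and of making the countable-rank jumps visible, and the compatible-subset formalism does generalize nicely; the paper's version is simply shorter. One small remark: your stated ``main obstacle'' about the $R_i$ not being countably generated is not a real obstacle, since compatibility is governed by \emph{supports} and \emph{ranks} rather than generators---your own closure argument already works because the $M_J$ are RD, so a countable maximal independent set in $\pi_N(M_{J_n})$ forces $\pi_N(M_{J_n})\subseteq M_{J_{n+1}}$ regardless of how many generators each $R_i$ needs. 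The countable-rank endgame you sketch (finite-rank RD-closures in $N$ are summands by Theorem \ref{finite}, then climb a chain) is exactly the separability argument the paper gives, phrased slightly differently.
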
 

\begin{proof} The proof begins with the reduction to the countable rank case. By the rank version of a well-known theorem by Kaplansky \cite{K}, summands of modules that are direct sums of countable rank submodules are themselves direct sums of countable rank summands.  In view of this, it is straightforward to see that it will suffice to prove that if $M = A \oplus B$ is a {\sl countable rank} completely decomposable \tf\ $R$-module, then $A$ is also completely decomposable. 

Further reduction is possible if we make use of Kolettis' theorem quoted above. Indeed, a completely decomposable module being homogeneously decomposable, from Theorem \ref{K} it follows that for the proof of Theorem \ref{cd} we may assume without loss of generality that $M$ is {\sl homogeneous}.

The next step in the proof is to show that the summand $A$ of $M$ is  separable. So, let $a_1, \dots, a_n$ be elements of $A$.  Clearly, there is a finite rank  \cd\ summand $N$ of $M$ that contains all of $a_1, \dots, a_n$. The RD-submodule $A'$ spanned by the elements $a_1, \dots, a_n$ is by Olberding's theorem a  \cd\ summand of $N$. Thus $A'$ is a  completely decomposable summand of $M$, and hence of $A$. This shows that all \fr\ RD-submodules are \cd\ summands, establishing the separability of $A$.  

Thus $A$ is the union of a countable chain of \fr\ \cd\ submodules each of which is a summand of the following ones with \cd\  complements.  It follows    that $A$ is  completely decomposable, completing the proof of the theorem.  
\end{proof} 

\bigskip

\section {Summands of separable modules} \medskip

   We start the discussion of separability (defined above) with a general lemma that holds over all integral domains.

  \begin{lemma} \label{sep} A domain $R$ has the property that summands of separable torsion-free $R$-modules are again separable if and only if summands of completely decomposable torsion-free $R$-modules of countable rank  are again  \cd.  
\end{lemma}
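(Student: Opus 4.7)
The lemma asserts equivalence of two conditions on $R$; the substantive direction is $(\Leftarrow)$, deducing separability of summands of separable modules from the assumption (CD) that summands of \cd\ \tf\ modules of countable rank are \cd. For $(\Leftarrow)$, let $M = A \oplus B$ with $M$ separable; I verify the two separability conditions for $A$. Property (2) is immediate: any \fr\ summand $A_0$ of $A$ is a \fr\ summand of $M$ via $M = A_0 \oplus (A_0' \oplus B)$ (where $A = A_0 \oplus A_0'$) and hence is \cd\ by separability of $M$.

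The core is condition (1): given $a_1, \ldots, a_n \in A$, construct a \fr\ \cd\ summand of $A$ containing them. Let $\pi_A, \pi_B$ be the projections of $M$ onto $A, B$. By (1) for $M$, choose a \fr\ \cd\ summand $N_0$ of $M$ containing $\{a_i\}$. Inductively, noting that each \fr\ \cd\ module is RD-generated by a finite set (a nonzero element from each rank $1$ summand), pick a finite RD-generating set $S_k$ of $N_k$ and let $N_{k+1}$ be a \fr\ \cd\ summand of $M$ containing the finite set $S_k \cup \pi_A(S_k) \cup \pi_B(S_k)$. RD-closure inside $M$ then forces $N_{k+1} \supseteq \pi_A(N_k) + \pi_B(N_k) \supseteq N_k$, and $N_k$ is a summand of $N_{k+1}$ (both are summands of $M$ with $N_k \subseteq N_{k+1}$). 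Setting $U = \bigcup_k N_k$ and choosing splittings $N_{k+1} = N_k \oplus C_k$ yields $U = N_0 \oplus \bigoplus_k C_k$, a countable rank \cd\ module containing all $a_i$ and satisfying $U = (U \cap A) \oplus (U \cap B)$.

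Hypothesis (CD) then makes the summand $U \cap A$ of the countable rank \cd\ module $U$ itself \cd, so $U \cap A = \bigoplus_m L_m$ with each $L_m$ rank $1$, and for $K$ large enough $F := \bigoplus_{m \leq K} L_m$ is a \fr\ \cd\ direct summand of $U \cap A$ containing all $a_i$. The main obstacle is the remaining step: promoting $F$ from a summand of $U \cap A$ to a summand of $A$. This would follow at once from $U$ being a summand of $M$, since then $U \cap A$ is a summand of $A$ by the modular formula $A = (U \cap A) \oplus (A \cap V)$ for any complement $M = U \oplus V$, and $F$ descends as a summand of $A$; the subtlety is that the iterative construction only a priori produces $U$ as a countable rank pure submodule, and upgrading it to a direct summand of $M$ is the technical heart of the argument, relying on the interplay between separability of $M$ and the \cd\ structure of $U$.

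For the converse $(\Rightarrow)$, assume (SEP) and let $M$ be \cd\ of countable rank with summand $A$. Kolettis's Theorem~\ref{K} applied to $M$ decomposes $A = \bigoplus_\alpha H_\alpha$ with each $H_\alpha$ homogeneous of some type $\tau_\alpha$, reducing us to proving each homogeneous summand $H_\alpha$ of $M$ is \cd. Since $H_\alpha \cap M^*(\tau_\alpha) = 0$ (no rank $1$ submodule of $H_\alpha$ has type exceeding $\tau_\alpha$), passing to $M(\tau_\alpha)/M^*(\tau_\alpha)$ embeds $H_\alpha$ as a summand of a homogeneous \cd\ module of countable rank and type $\tau_\alpha$, reducing to the homogeneous case. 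There the enveloping module is separable, so (SEP) yields $H_\alpha$ separable, and the standard argument (enumerate a countable RD-generating set, successively extract \fr\ \cd\ summands, and sum their complements) shows that a separable module of countable rank is \cd.
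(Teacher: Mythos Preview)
Your construction for $(\Leftarrow)$ is essentially the paper's, but you stop at the very point where the argument closes, and you misdiagnose what is missing. You do \emph{not} need $U$ to be a summand of $M$. The key is that $F$ has \emph{finite rank}. Since $U = N_0 \oplus \bigoplus_k C_k$, each $N_k$ is a summand of $U$, hence pure in $U$; so for each rank~$1$ piece $L_m$ of $F$, a single nonzero element $x_m \in L_m$ lying in some $N_{k_m}$ forces $L_m$ (the pure hull of $x_m$ in $U$) to be contained in $N_{k_m}$. Taking $k = \max_m k_m$ gives $F \subseteq N_k$. Now $F$ is a summand of $U$ and $F \le N_k \le U$, so by modularity $F$ is a summand of $N_k$; but $N_k$ is a summand of $M$, hence $F$ is a summand of $M$, and since $F \le A$, also of $A$. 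This is precisely the paper's closing move: ``since $H \le M_k < M'$ for some $k < \w$, $H$ is a summand of $M_k$, so also of $M$, and hence of $A$.'' So the ``technical heart'' you flag does not exist; the step is one line once you exploit finiteness of $\rk F$ rather than trying to promote all of $U$.

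For $(\Rightarrow)$ your route through Kolettis and the quotient $M(\t_\a)/M^*(\t_\a)$ is an unnecessary detour. The paper argues directly: a \cd\ module of countable rank is trivially separable, so by (SEP) its summand $A$ is separable, and a countable rank separable module is \cd\ (write it as the union of a chain of \fr\ \cd\ summands, each a summand of the next; the complements are summands of \fr\ \cd\ modules, hence separable by (SEP), hence \cd\ since a \fr\ separable module is its own \fr\ summand). Your argument is not wrong, just longer than needed, and you should make explicit why the complements in your ``standard argument'' are \cd.
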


\begin{proof} Before proving the equivalence of the stated conditions, we observe that either implies that summands of \fr\ \cd\ $R$-modules are again \cd. As a consequence, we can argue (as in the final part of the proof of Theorem \ref{cd}) that countable rank separable $R$-modules are \cd.

Necessity follows at once by applying the hypothesis to a \cd\ module of countable rank noting that countable rank separable modules are  \cd.

 For sufficiency, assume that summands of completely decomposable torsion-free $R$-modules of countable rank  are    \cd. Let $M$ be a separable \tf\  $R$-module, and $M = A \oplus B$ a direct decomposition of $M$. Given a finite subset $S$ in $A$, we have to show that $S$ is contained in a \fr\ summand $H$ of $A$ and \fr\ summands of $A$ are \cd.

Let $M_0$ be a \fr\ \cd\ summand of $M$ containing $S$, and let $A_0, B_0$ be \fr\ RD-submodules of $A$ and $B$, respectively, such that $M_0 \le A_0 \oplus B_0$.  There is a \fr\ \cd\ summand $M_1$ of $M$ that contains a maximal independent set in $A_0 \oplus B_0$, and hence it contains both $A_0$ and $B_0$.  Furthermore,  there are  \fr\ RD-submodules $A_1, B_1$ of $A$ and $B$, respectively, satisfying $M_1 \le A_1 \oplus B_1$.  Continuing this way, we obtain an ascending chain
$$M_0 \le A_0 \oplus B_0\le M_1 \le A_1 \oplus B_1 \le \dots \le M_n \le A_n \oplus B_n  \le  \dots   \quad (n < \w)$$
where $M_n$ are finite rank summands of $M,$ while $ A_n, B_n$ are finite rank RD-submodules of  $A, B$. The union $M'$ of this chain is a countable rank submodule of $M$ which is \cd\ as the union of the chain of \cd\ modules $M_n$ where every module in the chain is a summand in each of the following ones with \cd\ complement.  Moreover, by construction, we have 
$$M' = A' \oplus B' \quad {\rm where}\ \ A'= \bigcup_n A_n, \ B'= \bigcup_n B_n.$$ 
By hypothesis, $A', B'$ are \cd\ as summands of the \cd\ module $M'$ of countable rank. Therefore, $S$ is contained in a \fr\ \cd\ summand $H$ of $A'$.  Then $H$ is a summand of $M'$, and since $H \le M_k < M'$ for some $k < \w$, $H$ is a summand of  $M_k$,  so also of $M$, and hence of $A$.

From our argument it is also clear that \fr\ summands of $A$ are summands of a \cd\ module, so they are themselves \cd. 
\end{proof}

Consequently, combining Theorem \ref{cd} and Lemma \ref{sep} we can state:

\begin{theorem} \label{main} Summands of  separable torsion-free modules over an $h$-local \Pd\ are  separable.   \qedsymbol \end{theorem}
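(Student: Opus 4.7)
The plan is to derive Theorem \ref{main} as an immediate corollary of the two results just established, with essentially no new work. Since an $h$-local \Pd\ satisfies the hypothesis of Theorem \ref{cd}, every summand of a \cd\ \tf\ $R$-module is itself \cd. In particular, this conclusion applies to all \cd\ modules of countable rank, which is precisely the condition appearing on the right-hand side of the equivalence in Lemma \ref{sep}.

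Therefore, I would simply invoke the sufficiency direction of Lemma \ref{sep} with $R$ an $h$-local \Pd: the hypothesis is verified by Theorem \ref{cd}, so the conclusion that summands of separable \tf\ $R$-modules are again separable follows at once. The proof is literally one line of combination. There is no real obstacle left, since all the substantial work has already been done in the two preceding results: Theorem \ref{cd} carried out the reduction to the countable rank case via the rank version of Kaplansky's theorem, then to the homogeneous case via Kolettis' Theorem \ref{K}, and handled finite ranks via Olberding's Theorem \ref{finite}; while Lemma \ref{sep} supplied the technical bridge that constructs, from a separable module $A \oplus B = M$ and a finite subset $S \subseteq A$, a countable rank \cd\ submodule $M' = A' \oplus B'$ inside which $S$ lies in a \fr\ \cd\ summand of $A'$, and hence of $A$.
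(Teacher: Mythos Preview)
Your proposal is correct and matches the paper's own argument exactly: the paper simply writes ``Consequently, combining Theorem \ref{cd} and Lemma \ref{sep} we can state'' before Theorem \ref{main}, with no further proof. Your one-line combination---Theorem \ref{cd} verifies the right-hand condition of Lemma \ref{sep}, so its sufficiency direction yields the result---is precisely what is intended.
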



\bigskip

\section {Chains of \cd\  submodules between \cd\ submodules} \medskip
 
  We would like to call attention to an interesting phenomenon:  the existence of chains with countable rank factors between a  \cd\ module and  a  \cd\ RD-submodule; see Proposition \ref{chainbetween}.  This has been pointed out for abelian groups by Dugas-Rangaswamy \cite{DR} (cf. also Fuchs-Viljoen \cite{FV}), and interestingly, it holds over  arbitrary integral domains. It provides an additional evidence that complete decomposability is intimately tied to countability even in more general situations.
  
 We phrase the results more generally, for modules that are direct sums of   countable rank submodules.    
 The \cd\ case will then be a simple corollary. 
 
 We require a preliminary lemma. 
   
 \begin{lemma} \label{subcd} Suppose $ B $ is an $R$-module  that is a direct sum of modules of countable rank,  and  $A$ is a submodule of $B$ that is likewise a direct sum of countable rank modules.
  
  {\rm (i)}  If $B'$ is a summand of $B$ such that  $A'= A \cap B'$ is a summand of $A$, then $A +B'$ is  a direct sum of modules of countable rank.
  
   {\rm (ii)} There exist $\GG^*(\al_0)$-families $\AA$ and $ \BB$ of summands in $A$ and $B$, respectively, such that $\AA = \{A \cap X \ | \ X \in \BB\}$. 
 \end{lemma}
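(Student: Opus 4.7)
My plan is to handle (i) by an internal direct sum decomposition of $A+B'$ combined with the rank version of Kaplansky's theorem (as invoked in the proof of Theorem \ref{cd}), and to handle (ii) by a back-and-forth construction on the index sets of fixed decompositions into countable rank summands.

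For (i), pick an $A$-complement $A''$ of $A'=A\cap B'$, so $A=A'\oplus A''$. Since $A'\le B'$, every element of $A+B'$ can be written as $a''+(a'+b')$ with $a''\in A''$ and $a'+b'\in B'$, hence $A+B'=A''+B'$. This sum is direct: if $a''=-b'$ with $a''\in A''$ and $b'\in B'$, then $a''\in A\cap B'=A'$, forcing $a''\in A'\cap A''=0$. Now $A''$ is a summand of $A$ and $B'$ a summand of $B$; since both $A$ and $B$ are direct sums of countable rank modules, the rank version of Kaplansky's theorem yields that $A''$ and $B'$ are themselves direct sums of countable rank modules, and therefore so is $A+B'=A''\oplus B'$.

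For (ii), fix decompositions $A=\oplus_{\b\in J}A_\b$ and $B=\oplus_{\a\in I}B_\a$ into countable rank summands, and for subsets $K\subseteq I$, $L\subseteq J$ write $A_L=\oplus_{\b\in L}A_\b$ and $B_K=\oplus_{\a\in K}B_\a$. Let $\FF=\{(K,L):A\cap B_K=A_L\}$, and define $\BB=\{B_K:(K,L)\in\FF\}$ and $\AA=\{A_L:(K,L)\in\FF\}$. By construction $\AA=\{A\cap X:X\in\BB\}$ and both families consist of summands of the ambient modules (their complements being the partial sums over the complementary index sets). The pairs $(\emptyset,\emptyset)$ and $(I,J)$ lie in $\FF$, and because intersection commutes with directed unions, $\FF$ is closed under coordinatewise unions of chains. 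The substantive point, which delivers H3 simultaneously for $\AA$ and $\BB$, is the following closure property: for every $(K,L)\in\FF$ and every countable $K^0\subseteq I$, $L^0\subseteq J$, there is $(K',L')\in\FF$ with $K\cup K^0\subseteq K'$, $L\cup L^0\subseteq L'$, and $|K'\setminus K|,|L'\setminus L|\le\al_0$. To construct $(K',L')$, set $K_0=K\cup K^0$, $L_0=L\cup L^0$, and iterate: add to $L_n$ enough $\b$'s so that $A\cap B_{K_n}\subseteq A_{L_{n+1}}$, and then add to $K_n$ enough $\a$'s so that $A_{L_{n+1}}\subseteq B_{K_{n+1}}$. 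At each stage countably many new indices suffice: $(A\cap B_{K_n})/A_L$ embeds into the countable rank module $B_{K_n}/B_K$, hence its image in $A/A_L=\oplus_{\b\notin L}A_\b$ is supported on countably many new $\b$'s; symmetrically, $A_{L_{n+1}}/A_L$ has countable rank, so its image in $B/B_K$ is supported on countably many new $\a$'s.

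The main obstacle is verifying that taking $K'=\bigcup_n K_n$ and $L'=\bigcup_n L_n$ returns us to $\FF$, i.e. $A\cap B_{K'}=A_{L'}$. The inclusion $A_{L'}\subseteq A\cap B_{K'}$ follows from $A_{L_{n+1}}\subseteq B_{K_{n+1}}\subseteq B_{K'}$, while the reverse follows from $A\cap B_{K'}=\bigcup_n(A\cap B_{K_n})\subseteq\bigcup_n A_{L_{n+1}}=A_{L'}$. The countable rank hypothesis on the summands $A_\b$ and $B_\a$ is used exactly here, in the guarantee that countable support propagates through the alternation.
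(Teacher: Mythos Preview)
Your proof is correct and follows essentially the same route as the paper. For (i) the paper writes $A+B'\cong B'\oplus A/A'$ while you realize $A/A'$ concretely as a complement $A''$, which is the same argument; for (ii) the paper defines the families exactly as you do (pairs of partial summands with $A'=A\cap B'$) but simply asserts the $G^*(\aleph_0)$-axioms hold, whereas you spell out the back-and-forth construction that verifies H3. One small remark: your sentence ``$\FF$ is closed under coordinatewise unions of chains'' immediately gives G2 for $\BB$ (a chain of $B_K$'s forces a chain of $K$'s, and then the corresponding $L$'s are automatically nested), but for $\AA$ a chain of $A_L$'s need not come with comparable $K$'s; the easy fix is to replace each $K$ by the support of $A_L$ in $B$, which is minimal and hence monotone in $L$.
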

  
    \begin{proof}  (i)  By a well-known Kaplansky result \cite{K} already mentioned above, summands of a module that is a direct sum of modules of countable rank are again direct sums of modules of countable rank. Consequently, $(A+B')/B' \cong A/A'$ is a module that is a direct sum of modules of countable rank.  Furthermore, $B'$ is a summand of $A+B'$, thus $A+B' \cong B' \oplus A/A'$ is likewise a direct sum of submodules of countable rank.
    
    (ii)  Fix   decompositions of $A$ and $B$ as direct sums of countable rank modules, and let $\AA'$ and $\BB'$ denote the  $\HH^*(\al_0)$-families  of direct sums of subsets of these summands in $A$ and $B$, respectively,  The first and the second entries in the pairs $(A', B')$ with $A' = A \cap B' \ (A' \in \AA', B' \in \BB')$ yield the desired $\GG^*(\al_0)$-families $\AA$ and $ \BB$, respectively.  \end{proof}

   We can now verify: 

  \begin{proposition} \label{chainbetween} Suppose $ A $ is an RD$^*$-submodule of  the \tf\ $R$-module $B$ such that both $A$ and $B$ are direct sums of countable rank submodules.  
  
    {\rm (i)} For some ordinal $\t$, there is a \cwac\  
     $$ A=B_0 \le B_1 \le \dots \le B_\s \le \dots \le  B_\t=B \leqno (1) $$
of  RD-submodules  between $A$ and $B$ such that  each $B_\s$ is a direct sum of submodules of countable rank and 
$B_{\s+1} /B_\s $ is \tf\  of   rank $\le \al_0$,  for every $\s < \t . $
 
  {\rm (ii)}  If  $A$ and  $B$ are \cd, then the  $ B_\s $ can be chosen to be \cd\  as well.  \end{proposition}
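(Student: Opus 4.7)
The plan is to build the chain by transfinite recursion on $\sigma$, using two ingredients. Lemma \ref{subcd}(ii) provides compatible $G^*(\al_0)$-families of summands $\AA$ in $A$ and $\BB$ in $B$ with $\AA = \{A \cap X \mid X \in \BB\}$; this will carry the direct-sum-of-countable-rank structure via Lemma \ref{subcd}(i). The RD$^*$-hypothesis on $A$, through Lemma \ref{RD*}, supplies countable-rank preimages in $B$ of countable-rank submodules of $B/A$; this will let us take RD-closures without leaving countable rank.

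Fix decompositions of $A$ and $B$ into countable rank summands (into rank-one summands for part (ii)), and well-order the summands of $B$. Begin with $B_0 = A = A + 0$, and maintain inductively that $B_\sigma = A + X_\sigma$ for some $X_\sigma \in \BB$, that $B_\sigma$ is RD in $B$, and that $B_\sigma$ is a direct sum of countable rank modules. At a successor step $\sigma+1$ with $B_\sigma \ne B$, let $B_j$ be the first summand of $B$ not yet contained in $B_\sigma$ and carry out a countable back-and-forth inside $\BB$: by H3 of $\BB$, take $Y_0 \in \BB$ containing $X_\sigma$ and $B_j$ with countable rank growth, and at stage $n+1$ use H3 of $\BB$ again to pick $Y_{n+1} \in \BB$ containing $Y_n$ together with a maximal independent set of a countable-rank preimage in $B$ of the RD-closure of $(A + Y_n)/A$ in $B/A$, whose existence is guaranteed by the RD$^*$-property; since $Y_{n+1}$ is RD in $B$ as a summand, it in fact contains the whole preimage. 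Set $X_{\sigma+1} = \bigcup_n Y_n \in \BB$ (closure under chains) and $B_{\sigma+1} = A + X_{\sigma+1}$; take unions at limits.

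Then $B_{\sigma+1}/B_\sigma$ has countable rank; $B_{\sigma+1}$ is a direct sum of countable rank modules by Lemma \ref{subcd}(i), since $X_{\sigma+1} \in \BB$ is a summand of $B$ and $A \cap X_{\sigma+1} \in \AA$ is a summand of $A$; and $B_{\sigma+1}$ is RD in $B$, because any $b \in B$ with $rb \in B_{\sigma+1}$ already satisfies $rb \in A + Y_n$ for some $n$, placing $b$ in the RD-closure of $A + Y_n$ and hence in $A + Y_{n+1} \le B_{\sigma+1}$. Because we absorb the next uncovered summand of $B$ at each step, the chain reaches $B_\tau = B$ for some ordinal $\tau$. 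For part (ii), when the decompositions are into rank-one summands, each $X \in \BB$ is \cd, and the complement $A''$ of $A \cap X$ in $A$ may be chosen as the partial direct sum of the rank-one summands of $A$ not appearing in $A \cap X$, which is also \cd; hence $A + X = A'' \oplus X$ is \cd, so the same construction yields \cd\ submodules $B_\sigma$ throughout.

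The main obstacle is the back-and-forth at each successor step: over an uncountable domain the RD-closure of a countable-rank submodule of $B$ need not be countably generated, so only the RD$^*$-hypothesis guarantees a countable-rank lift that H3 of $\BB$ can absorb at the next iteration, allowing the construction to stay inside the summand family $\BB$ while attaining RD-ness only in the $\omega$-limit.
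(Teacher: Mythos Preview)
Your overall strategy matches the paper's: write each link as $B_\sigma = A + X_\sigma$ with $X_\sigma$ in the $G^*(\al_0)$-family $\BB$ of summands of $B$ furnished by Lemma \ref{subcd}(ii), so that Lemma \ref{subcd}(i) gives the direct-sum-of-countable-rank property, and arrange separately that $A + X_\sigma$ is RD in $B$. The difficulty is in this last point, and your back-and-forth contains a genuine gap there.

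You assert that the RD$^*$-property yields a countable-rank preimage in $B$ of the RD-closure of $(A+Y_n)/A$. But RD$^*$ only provides countable-rank preimages of \emph{countable-rank} submodules of $B/A$, and $(A+Y_n)/A \cong Y_n/(A\cap Y_n)$ need not be of countable rank. Indeed, once $\sigma$ has passed the first uncountable limit ordinal, $X_\sigma = \bigcup_{\rho<\sigma} X_\rho$ will typically have uncountable rank, hence so will every $Y_n \supseteq X_\sigma$, and the RD-closure you wish to lift is itself of uncountable rank; RD$^*$ then says nothing. Your construction therefore runs only up to $\sigma < \omega_1$. The paper sidesteps this by invoking RD$^*$ just once, through Lemma \ref{RD*}: taking $\CC$ to be a $G^*(\al_0)$-family of RD-submodules of $B/A$, one obtains a $G^*(\al_0)$-family $\GG$ in $B$ with $A+G$ RD in $B$ for every $G\in\GG$. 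The intersection $\BB\cap\GG$ is again a $G^*(\al_0)$-family, and one simply extracts from it a \cwac $0=B'_0<B'_1<\dots<B'_\tau=B$ with countable-rank factors, setting $B_\sigma = A + B'_\sigma$. RD-ness comes from $\GG$, the direct-sum structure from $\BB$ via Lemma \ref{subcd}(i), and $B_{\sigma+1}/B_\sigma$ is an epimorphic image of $B'_{\sigma+1}/B'_\sigma$. The point is that RD$^*$ is used only inside the verification of H3 for $\GG$, where the quotients involved are genuinely of countable rank; the passage to all cardinalities is then handled automatically by the $G^*(\al_0)$-family axioms, not by a fresh appeal to RD$^*$ at every step. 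Your treatment of part (ii) is correct and agrees with the paper's.
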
 

 \begin{proof}  (i) Select $\GG^*(\al_0)$-families $\AA$ and $ \BB$ of summands in $A$ and $B$, respectively,  as stated in Lemma \ref{subcd} (ii).  In view of Lemma \ref{RD*}, we can find in $B$ a $\GG^*(\al_0)$-family $\GG$ of RD-submodules $B'$ such that $A+B'$ is always an RD-submodule of $B$.  The intersection $\BB \cap \GG$ is evidently a $\GG^*(\al_0)$-family, from which we extract a \cwac\  $ 0= B'_0 < B'_1 <\dots < B'_\s < \dots < B'_\t=B $ such that all $B'_{\s+1} /B'_\s$ are of countable rank.  Next we form a chain (1) with the RD-submodules $B_\s = A+B'_\s \ (\s < \t)$. Lemma \ref{subcd}(i) guarantees that the chain (1)  will have the desired property, since $$B_{\s+1} /B_\s \cong B'_{\s+1} / [B'_{\s+1} \cap (A+B'_{\s})] = B'_{\s+1} / [(B'_{\s+1} \cap A)+B'_{\s}] $$ is a surjective image of $B'_{\s+1} /B'_\s$. 
 
 (ii) In case both $A$ and $B$ are completely decomposable, then  the summands $A', B'$ in  Lemma  \ref{subcd}(i) can be chosen such that all the modules $A/A'$ and $B'$ are \cd.  Then the modules
$B_\s $ of the preceding paragraph will be  \cd.     \end{proof}

\bigskip


  \section {Chains of \fd\  modules} \medskip

The classical Pontryagin theorem on \tf\ abelian groups states that the union of an ascending sequence of finite rank free groups is free whenever each group in the sequence is pure in its immediate successor.  This important theorem has been generalized by Hill \cite{H}: the union of an ascending sequence $0 = A_0 < A_1<  \dots 
< A_n < \dots \quad (n < \w) $ of free abelian groups (of any size) is free provided that for each $n < \w$, $A_n $ is pure in $A_{n+1}$.  Our next goal is to establish an analogous result for homogeneous \cd\ modules over an $h$-local \Pd\ (Theorem \ref{mainth}).  (A similar result on \vd s was proved by Rangaswamy \cite{R}.)  In this section, we prove a preparatory result (Theorem \ref{fd}) that might be of independent interest. 
 It is phrased in more general terms than needed in what follows in order to emphasize a main point that makes things work for countable unions.

 By a {\it finitely decomposable} \tf\ $R$-module we mean a module that is the direct sum of \fr\ submodules. 
 We call an RD-submodule $A$ of the \tf\ $R$-module $M$ {\it ultra-balanced} if $A$ is a summand in every RD-submodule $C$ of $M$ that contains $A$ as finite corank submodule. (Ultra-balanced subgroups of abelian groups have been introduced and discussed by T. Chao \cite{C}. Ultra-balanced submodules are of course balanced.)
 The meaning of {\it `ultra-balanced projective'} is evident. It is straightforward to check that  the  ultra-balanced projective modules are precisely the summands of   \fd\ modules. They are not necessarily \fd, not even for abelian groups; this is demonstrated by an example of Corner \cite{Co}: a countable \fd\ \tf\ abelian group that is the direct sum of two indecomposable groups of countable rank. 

We now state the crucial lemma (some arguments are similar e.g. to \cite[Lemma 5.2]{R}). 

\begin{lemma} \label{crucial}   Assume that  the  $R$-module $M$ is the union of an ascending chain
   $$0 = M_0 < M_1<  \dots < M_n < \dots \quad (n < \w) \leqno (2)$$
of \tf\ submodules such that 

{\rm (i)}  each $M_n$ admits a $G^*(\al_0)$-family $\DD_n$ of direct summands, and 
 
 {\rm (ii)}   for each $n < \w$, $M_n$ is  an RD*-submodule in $M_{n+1}$.    

\nin  Then there exists a $G^*(\al_0)$-family $\BB$ of  ultra-balanced submodules of $M$ such that for all $n < \w$ and for all $A \in \BB$ we have

{\rm (a)} $A \cap M_n \in \DD_n$; and 

{\rm (b)} $ A + M_n$ is an RD-submodule in $M$.
 \end{lemma}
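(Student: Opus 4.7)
The plan is to take
$$\BB = \{A \le M : A \cap M_n \in \DD_n \text{ and } A + M_n \text{ is RD in } M \text{ for every } n < \w\}$$
and verify that $\BB$ is a $G^*(\al_0)$-family of ultra-balanced submodules of $M$. The bookkeeping items are immediate: $0, M \in \BB$, and if $\{A_i\}$ is a chain in $\BB$ then $(\bigcup_i A_i) \cap M_n = \bigcup_i (A_i \cap M_n)$ belongs to $\DD_n$ by the G2-axiom for $\DD_n$, while $(\bigcup_i A_i) + M_n = \bigcup_i (A_i + M_n)$ is RD in $M$ as the union of a chain of RD-submodules. Iterating transitivity of RD$^*$ along the chain (2) moreover gives that each $M_n$ is RD$^*$ in $M$, a fact we shall use.

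Ultra-balancedness of each $A \in \BB$ should be the easier of the two non-trivial parts. Suppose $C$ is an RD-submodule with $A \le C \le M$ and $C/A$ of finite rank; choose $c_1,\dots,c_k \in C$ representing a maximal independent set of $C/A$. They all lie in some $M_N$, and since $A + M_N$ is RD in $M$ and contains each $c_i$, the RD-closure in $M$ of $A + \<c_1,\dots,c_k\>$ --- which equals $C$ --- is contained in $A + M_N$. The hypothesis $A \cap M_N \in \DD_N$ supplies a splitting $M_N = (A \cap M_N) \oplus K_N$, whence $A + M_N = A + K_N$ together with $A \cap K_N = 0$ yield $A + M_N = A \oplus K_N$. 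Every $c \in C \subseteq A + M_N$ writes uniquely as $c = a + k$ with $a \in A$ and $k \in K_N$, so $k = c - a \in C$, and therefore $C = A \oplus (C \cap K_N)$.

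The main step is the RD$^*$-version of G3. Given $A \in \BB$ and a countable subset $X \subseteq M$, which we may assume lies in some $M_N$, I would construct $B$ as the limit of an $\w$-indexed zigzag. Initialize $B^{(0)}_n = A \cap M_n$ for $n \ne N$, and take $B^{(0)}_N \in \DD_N$ of countable rank over $A \cap M_N$ containing $X$ (possible by G3 for $\DD_N$). At stage $j+1$, for each $n < \w$, enlarge $B^{(j)}_n$ inside $\DD_n$ to some $B^{(j+1)}_n \in \DD_n$ of countable rank over $B^{(j)}_n$ so that $B^{(j+1)}_n$ absorbs (i) $B^{(j)}_m \cap M_n$ for every $m$, forcing in the limit $B \cap M_n = B^*_n := \bigcup_j B^{(j)}_n$; and (ii) for each $m \le j$, the $M_n$-trace of a countable-rank preimage in $M$ --- furnished by RD$^*$ of $M_m$ in $M$ --- of an RD-closure taken in $M/M_m$ that captures the new RD-witnesses produced by the image of $\sum_n B^{(j)}_n$. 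Putting $B = \bigcup_n B^*_n$: condition (i) gives $B \cap M_n = B^*_n \in \DD_n$ (union of a chain in $\DD_n$), condition (ii) iterates to give $(B + M_m)/M_m$ RD in $M/M_m$ and hence $B + M_m$ RD in $M$, $B/A$ is of countable rank because only countably many countable-rank increments were added, and $B \supseteq A \cup X$ by construction.

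The main obstacle is running the zigzag cleanly: at every stage one must simultaneously (i) pull the ``higher'' slices $B^{(j)}_m \cap M_n$ down to level $n$ and (ii) adjoin enough RD-witnesses to close $B$ under RD modulo each $M_m$, all while keeping every $B^{(j)}_n$ in $\DD_n$ and every increment of countable rank. The RD$^*$-hypothesis is exactly the tool for (ii): it ensures that the countable-rank RD-witnesses needed in the quotient $M/M_m$ can be realized inside $M$ without blowing up the rank of $B$ over $A$ beyond $\al_0$.
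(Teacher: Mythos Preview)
Your proposal is correct and follows the same strategy as the paper: your family $\BB$ coincides with the paper's (by modularity $(A\cap M_n)+M_k=(A+M_k)\cap M_n$, so the two descriptions agree), and your ultra-balancedness argument is verbatim the paper's. The only organizational difference is in verifying the $G^*(\aleph_0)$-axiom H3: the paper first shows that $\BB_n=\{A\in\DD_n: A+M_k$ is RD in $M_n$ for all $k<n\}$ is a $G^*(\aleph_0)$-family by exhibiting it as a finite intersection $\DD_n\cap\GG_1\cap\dots\cap\GG_{n-1}$ via Lemma~\ref{RD*}, and then invokes \cite[Lemma~1.7]{FH} for the standard zigzag that assembles the $\BB_n$'s into $\BB$; you instead run one combined zigzag directly, which is the same argument unpackaged.

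Two small points to tighten in your write-up. First, a countable $X\subseteq M=\bigcup_n M_n$ need not lie in any single $M_N$; feed the elements of $X$ into the zigzag level by level rather than all at once. Second, in your step~(ii) make explicit that what you lift via RD$^*$ is only a countable-rank submodule of $M/M_m$ covering the \emph{increment} of the RD-closure over the already-RD submodule $(A+M_m)/M_m$: the full RD-closure of the image of $\sum_n B^{(j)}_n$ contains $(A+M_m)/M_m$ and may therefore have arbitrarily large rank, so RD$^*$ cannot be applied to it directly. Your phrase ``new RD-witnesses'' suggests you have this in mind, but it should be said.
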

 
\begin{proof}  Assume (2) satisfies hypotheses  (i) and (ii). First of all, we claim that  the collection
$$\BB_n = \{ A \in \DD_n \ | \ A+ M_k\ {\rm  is\  RD} \ {\rm in} \ M_n   \ (k < n)\}$$ 
is a $G^*(\al_0)$-family of summands in $M_n$. 
 By hypothesis (ii), $M_n$ has  a $G^*(\al_0)$-family $\GG_k \ (k< n)$ of RD-submodules such that its members project onto RD-submodules of $M_n/M_k$ (see Lemma \ref{RD*}).  It is readily checked that  $$\BB_n= \DD_n \cap  \GG_1 \cap \dots \cap \GG_{n-1}$$ is as desired.

 The next step is to show that   the collection
$$\BB = \{A \leq M \ | \ A \cap M_n \in \BB_n \  {\rm for\ each} \  n <\w\}$$
is a $G^*(\al_0)$-family of RD-submodules in $M$. 
 For details, we refer to the proof of \cite[Lemma 1.7]{FH}.  
   It follows easily that the $G^*(\al_0)$-family $\BB$ of RD-submodules  will have properties (a) and (b).  E.g. to check condition (b) just observe that the RD-property is transitive and $A = \bigcup_n (A  \cap M_n)$.
 
 It remains to show that the submodules in $\BB$ are ultra-balanced in $M$. Suppose $A \in \BB$,  and let $C$ be an RD-submodule of $M$ such that $A  < C$ with $C/A$ of finite rank. Pick a maximal independent set $S=\{c_1, \dots, c_k\}$ in $C$ mod $A $. There is an index $n$ such that $S \subset M_n$.    By (b), $A + M_n$ is an RD-submodule in $M$, and the same is true for $A + (  M_n  \cap C) =  (A + M_n) \cap C$.   This RD-submodule contains both $A$ and $S$, consequently, $A  + ( M_n  \cap C) =C$. By (a), $M_n \cap A  $ is a summand of $M_n$, say, $M_n =  (M_n \cap A) \oplus B$.  Therefore,  $ M_n \cap C = (M_n \cap A ) \oplus ( B \cap C)$, whence $$C = A+  (M_n \cap A) +(  B \cap C)   = A + ( B \cap C)$$  follows.  Since $A \cap  B \cap C  = A   \cap B  = A  \cap B \cap M_n =0$, we have $C= A  \oplus (  B \cap C)$.  Here $ B \cap C$ is a finite rank RD-submodule of $M$, so $A$ is a summand of every submodule of $M$ in which it is contained with finite corank, i.e. $A$ is ultra-balanced in $M$.
  \end{proof} 

The countable rank version of Theorem \ref{fd} is proved separately as our next lemma.

\begin{lemma} \label{cfd} Assume  $(2) $ is a chain of \tf\ $R$-modules of countable rank such that

 {\rm (a)}  each $M_n$ is \fd; 

{\rm (b)} $M_n$ is an RD-submodule of $M_{n+1}$ for each $n< \w$.

 \nin A necessary and sufficient condition that the union $M$ of the chain be \fd\ is 
 
$ (^*)$ for every finite set $S$ of elements in $M$ there exist an index $n$ and a \fr\   submodule $C$ of $M$ containing $S$ such that $C$ is a summand of $M_m$ for all $m \ge n$.  
\end{lemma}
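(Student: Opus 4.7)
The plan splits into an easy necessity and a recursive sufficiency argument. For necessity, suppose $M=\bigoplus_\alpha N_\alpha$ is \fd\ with each $N_\alpha$ of \fr. Given a finite $S\subseteq M$, finitely many of the $N_\alpha$ suffice to cover $S$, yielding a \fr\ summand $C=N_{\alpha_1}\oplus\cdots\oplus N_{\alpha_k}$ of $M$. Writing $M=C\oplus C'$ and restricting to any $M_m\supseteq C$ gives $M_m=C\oplus(M_m\cap C')$, so $C$ is a summand of every such $M_m$, verifying $(^*)$.

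For sufficiency, I plan an $\w$-induction constructing a chain $0=B_0\le B_1\le\cdots$ of \fr\ submodules of $M$ with $B_{n+1}=B_n\oplus C_n$, each $C_n$ of \fr, and maintaining the invariant that $B_n$ is a summand of every $M_m$ with $m\ge k_n$, for some $k_n<\w$. Since $\rk M\le\al_0$, fix a countable maximal independent set $\{e_n\}_{n<\w}$ in $M$. At step $n$, pick a maximal independent set $\{b_1,\dots,b_r\}$ of $B_n$ and apply $(^*)$ to $S=\{b_1,\dots,b_r,e_n\}$, producing a \fr\ submodule $C_n'\supseteq S$ that is a summand of every $M_m$ with $m\ge k_n'$. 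The key observation is that $C_n'$, being RD in such $M_m$ and containing a maximal independent set of $B_n$, must already contain all of $B_n$: for $b\in B_n$ some nonzero $r\in R$ sends $rb$ into $C_n'$, and RD combined with \tf ness upgrades this to $b\in C_n'$. For $m\ge\max(k_n,k_n')$ the decomposition $M_m=B_n\oplus X$ then restricts to $C_n'=B_n\oplus(C_n'\cap X)$, because $B_n\subseteq C_n'$ forces the $X$-projection to send $C_n'$ into itself. I set $B_{n+1}:=C_n'$, $C_n:=C_n'\cap X$, and $k_{n+1}:=\max(k_n,k_n')$, preserving the invariant.

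The main obstacle will be to verify that $\bigcup_n B_n=M$, not merely that it has the same rank. Here hypothesis (b) enters decisively: since each $M_n$ is RD in $M_{n+1}$, transitivity of RD-ness in \tf\ modules and passage to the union show that each $M_{k_n}$ is RD in $M$. Because $B_n$ is a summand (hence RD) in $M_{k_n}$, it is RD in $M$; an ascending chain of RD submodules of a \tf\ module has RD union, so $\bigcup_n B_n$ is RD in $M$. By construction $e_n\in B_{n+1}$ for every $n$, so $\bigcup_n B_n$ contains the maximal independent set $\{e_n\}$; the standard argument (any $x\in M$ satisfies $rx\in\bigcup_n B_n$ for some nonzero $r$, whence RD plus \tf ness places $x$ in the union) then forces $\bigcup_n B_n=M$. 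Consequently $M=\bigoplus_{n<\w}C_n$ is \fd, completing sufficiency.
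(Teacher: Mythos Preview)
Your proof is correct and follows essentially the same strategy as the paper's: build an ascending chain of finite rank submodules, each a summand of all $M_m$ from some index on, whose union is $M$. You are more explicit than the paper about two points it leaves to the reader---why each new $C_n'$ actually contains the previous $B_n$ (your RD argument), and why the union exhausts $M$ (your final paragraph)---but the architecture is the same.
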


\begin{proof} Necessity is easy: if $M$ is \fd, then it must have a \fr\ summand $C$ containing a given finite set of elements, and  $C$ is necessarily a summand of each $M_n$ in which it is contained.  

For the proof of sufficiency, assume the stated condition. 
Select a maximal independent set $a_0, a_1, \dots, a_n, \dots$ of $M$. We construct a chain $C_0 \le C_1 \le \dots \le C_n \le \dots$ of submodules satisfying the following conditions:

$(\a)$ $a_0, a_1, \dots, a_n \in C_n$ for each $n < \w$;

$(\b)$ $C_n$ is a \fr\  summand of all $M_m$ for all $m \ge i_n$ for some $i_n$;

$(\g)$ $i_0 \le i_1 \le \dots \le i_n \le \dots$.

\nin Hypothesis $(^*)$ guarantees that such a chain does exist.  Clearly, $C_n$ will be a summand of $C_{n+1}$, because it is a summand of $M_{i_{n+1}}$ containing $C_{n+1}$; say, $C_{n+1} = C_{n} \oplus B_{n+1}$. Then $M$ will be the direct sum of $C_0$ and the $B_n$'s all of which are of \fr. Consequently, $M$ is \fd.
\end{proof}

Observe that the proof of the preceding lemma establishes the necessity of the condition  $ (^*)$  in the following theorem. 

\begin{theorem} \label{fd} Let $(2) $ be a chain of \tf\ $R$-modules.  Suppose that 

 {\rm (a)}  each $M_n$ is \fd; 

{\rm (b)} $M_n$ is an RD$^*$-submodule of $M_{n+1}$ for each $n< \w$.

 \nin A necessary and sufficient condition that the union $M$ of the chain be \fd\ is condition 
 $ (^*)$  in Lemma  \ref{cfd}. \end{theorem}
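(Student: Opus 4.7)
The necessity of $(^*)$ was essentially established in the proof of Lemma \ref{cfd}, so the plan focuses on sufficiency. My strategy is to bootstrap the countable-rank case (Lemma \ref{cfd}) through a continuous well-ordered ascending chain of ultra-balanced submodules supplied by Lemma \ref{crucial}.

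For the setup, I would fix decompositions $M_n = \bigoplus_{i \in I_n} N_{n,i}$ of each $M_n$ into finite rank summands and let $\DD_n$ be the $H^*(\al_0)$-family of partial direct sums $\bigoplus_{i \in J} N_{n,i}$ with $J \subseteq I_n$. These are in particular $G^*(\al_0)$-families of direct summands, so Lemma \ref{crucial} applies and yields a $G^*(\al_0)$-family $\BB$ of ultra-balanced submodules of $M$ such that for every $A \in \BB$ and every $n$, $A \cap M_n$ is a partial direct sum of the fixed decomposition of $M_n$ and $A + M_n$ is RD in $M$. From $\BB$, I would extract by transfinite recursion a continuous well-ordered ascending chain $0 = B_0 < B_1 < \dots < B_\t = M$ of ultra-balanced submodules with countable rank factors $B_{\s+1}/B_\s$.

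The heart of the argument is the successor step: show $B_{\s+1} = B_\s \oplus F_\s$ with $F_\s$ \fd. Since $B_\s \cap M_n \subseteq B_{\s+1} \cap M_n$ are partial direct sums of the fixed decomposition of $M_n$, we have $B_{\s+1} \cap M_n = (B_\s \cap M_n) \oplus E_n$ with $E_n$ a complementary partial direct sum; hence $\overline{M}_n := (B_{\s+1} \cap M_n + B_\s)/B_\s \cong E_n$ is \fd. The $\overline{M}_n$ form a chain of RD-submodules with union $B_{\s+1}/B_\s$ (using Lemma \ref{crucial}(b) together with the fact that RD passes to intersections with subgroups and to quotients by RD-submodules). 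To conclude via Lemma \ref{cfd} that $B_{\s+1}/B_\s$ is \fd, I would verify that $(^*)$ is inherited by the $\overline{M}_n$-chain: given a finite $\overline{S} \subseteq B_{\s+1}/B_\s$, lift to $S \subseteq B_{\s+1}$ and invoke $(^*)$ in $M$ to obtain a finite rank summand $C$ of $M_m$ (for all $m \geq n_0$) with $S \subseteq C$, and then extract from $C$ a finite rank summand of $\overline{M}_m$ containing $\overline{S}$. Once $B_{\s+1}/B_\s = \bigoplus_j \overline{G}_j$ with $\overline{G}_j$ finite rank is known, I would lift each $\overline{G}_j$ to a finite rank $G_j^* \subseteq B_{\s+1}$ (using that $B_\s$ is RD$^*$ in $B_{\s+1}$); then $B_\s + G_j^*$ is RD in $M$ with $B_\s$ of finite corank, so ultra-balancedness of $B_\s$ in $M$ yields $B_\s + G_j^* = B_\s \oplus H_j$ with $H_j$ finite rank. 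Independence of the $\overline{G}_j$ in $B_{\s+1}/B_\s$ forces the $H_j$ to be independent over $B_\s$, so $B_{\s+1} = B_\s \oplus \bigoplus_j H_j$; set $F_\s := \bigoplus_j H_j$.

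Once the successor step is in place, a transfinite induction --- with limits handled by the continuity of the chain --- produces a compatible decomposition $B_\s = \bigoplus_{\r < \s} F_\r$ for every $\s \le \t$. Specializing to $\s = \t$ yields $M = \bigoplus_{\r < \t} F_\r$, a direct sum of \fd\ modules and hence itself \fd. The hardest point will be the verification of $(^*)$ for the quotient chain $\overline{M}_n$: the naive image of $C$ under the projection $M_m \to \overline{M}_m$ need not be a direct summand, since the decomposition $M_m = C \oplus D_m$ supplied by $(^*)$ is generally incompatible with the partial direct sum structure of $M_m$. I expect to overcome this either by combining $C$ with an appropriate partial direct sum refinement of $M_m$, or by applying ultra-balancedness of $B_\s$ to the RD-closure of $B_\s + S$ in $B_{\s+1}$ (which is RD in $M$ with $B_\s$ of finite corank) to produce the required finite rank summand directly in $\overline{M}_m$.
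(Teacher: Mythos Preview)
Your overall architecture matches the paper's: fix the $H^*(\aleph_0)$-families $\DD_n$, invoke Lemma~\ref{crucial} to obtain the $G^*(\aleph_0)$-family $\BB$ of ultra-balanced submodules, and build a continuous well-ordered chain with countable-rank factors. The lifting argument at the end (using ultra-balancedness of $B_\s$ to split each $B_\s + G_j^*$) is also essentially correct.

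The genuine gap is precisely the step you flag as hardest. Neither of your suggested workarounds closes it. In your setup the witness $C$ for $(^*)$ lives in $M$ and need not be contained in $B_{\s+1}$, so ``projecting $C$ to $\overline M_m$'' is not even well-defined; taking $C\cap B_{\s+1}$ destroys the summand property. Your second idea, applying ultra-balancedness to the RD-hull of $B_\s+S$, does produce a single finite-rank complement $E$ with $B_\s\oplus E$ RD in $M$, but you then need $\overline E$ to be a \emph{summand of $\overline M_m$ for all large $m$}, and nothing in sight forces that: $\overline M_m\cong E_m$ is a partial direct sum in the fixed decomposition of $M_m$, whereas $\overline E$ is merely a finite-rank RD-submodule of it. So the inheritance of $(^*)$ to the quotient chain $(\overline M_n)$ is not established, and Lemma~\ref{cfd} cannot be invoked there.

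The paper sidesteps this entirely. It does \emph{not} try to verify $(^*)$ for a quotient chain. Instead, at the successor step it works inside $M$: it chooses the finite-rank modules $C_k$ (as in the proof of Lemma~\ref{cfd}) with an extra compatibility condition $(\delta)$, namely $C_k\cap M_{i_k}\in\BB_{i_k}$, achieved by enlarging $C_k$ by a suitable finite-rank summand of $N_\b$. Ultra-balancedness of $N_\b$ then gives $N_\b+C_k=N_\b\oplus Y_k$ directly, and a short rank-count (comparing the ranks of $C_k$, $C_{k+1}=C_k\oplus D_k$, and $N_\b+D_k=N_\b\oplus V_k$ modulo $N_\b$) shows $Y_{k+1}\equiv Y_k\oplus V_k$ modulo $N_\b$. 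This replaces your appeal to Lemma~\ref{cfd} in the quotient by an explicit telescoping of the $Y_k$'s, and the chain $N_\a$ is built so that the analogue of $(^*)$ (condition~(iv) in the paper) is carried along as part of the inductive hypotheses rather than recovered after the fact.
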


\begin{proof} Assuming  $ (^*)$,  let $\DD_n$ denote an $H^*(\al_0)$-family of summands in a fixed direct decomposition of $M_n$ as a direct sum of \fr\  submodules. We appeal to Lemma \ref{crucial} to conclude  that there is a $G^*(\al_0)$-family $\BB$ of  ultra-balanced submodules of $M$ such that $A \cap M_n \in \DD_n$ and 
$ A + M_n$ is an RD-submodule in $M$ for every $A \in \BB$ and for every $n < \w$. 

By transfinite induction we construct, for some ordinal $\m$,  a \cwac
$$ 0 = N_0 < N_1 < \dots < N_\a < \dots  \qquad (\a < \m) \leqno (3)$$
of  submodules of $M$ such that, for each $\a < \m$,

{\rm (i)} $N_\a$  is  \fd;

{\rm (ii)} $N_\a \in \BB$; 

{\rm (iii)} $N_\a$  is a summand in $N_{\a+1}$; 

{\rm (iv)}  for a finite subset $S$ of $N_\a$, $N_\a$  has a \fr\   summand $C$ of $M$ that contains $S$  and is a summand of $M_m$ for all $m \ge n$, for a suitable $n$;  

{\rm (v)} $ N_{\a+1}/N_\a$ is \fd\ of rank $ \le \al_0$;

{\rm (vi)} $M = \bigcup_{\a < \m} N_\a$.

\nin It will suffice to discuss the step from $N_\a$ to $N_{\a+1}$ for $\a < \m$.  So suppose that, for some ordinal $\b < \m$, the submodules $N_\a$ have been defined for all $\a \le \b$ satisfying (i)-(v). Pick a countable independent set $a_0, a_1, \dots, a_n, \dots$ modulo $N_\b$ in  $M$,  and proceed to construct a chain $C_0 \le C_1 \le \dots \le C_k \le \dots$ satisfying  conditions $(\a)$-$(\g)$ for the chosen elements $a_n$.  Moreover, in order to satisfy (iv), we require that the $C_k$ are such that

$(\d)$ $C_k \cap M_{i_k} \in \BB_{i_k} $ for each $k < \w$.

\nin This can be achieved if we increase the $C_k$  by including an appropriate \fr\ summand of $N_\b$.  
Then $N_\b \cap C_k= X_k$ will be a summand of $N_\b$, say, $N_\b = X_k \oplus P_k$. Furthermore,   by (ii) $N_\b$ is ultra-balanced in  $N_\b+C_k$, so  $N_\b / P_k \cong X_k$ is ultra-balanced in $(N_\b+C_k) / P_k$ whence $C_k = X_k \oplus Y_k$ follows for a suitable \fr\ submodule $Y_k$ of $M$. Similarly, we obtain $C_{k+1} = X_{k+1}  \oplus Y_{k+1} $.  Manifestly, these $Y_k \ (k < \w)$ form an ascending chain mod $N_\b$, and we set $$N_{\b+1}= \cup_{k < \w} (N_\b \oplus Y_k).$$

In order to verify (v) for index $\b$, we show that $Y_k$ is a summand of $Y_{k+1}$ mod $N_\b$.  We argue as follows. Write $C_{k+1} = C_k \oplus D_k$ for $k < \w$.  As $D_k$ is of \fr, we have $N_\b + D_k = N_\b \oplus V_k$ for some \fr\ module $V_k$ (again by the ultra-balancedness of $N_\b$).   In addition,
$$N_\b \oplus  Y_{k+1} = N_\b + C_{k+1} = N_\b + C_k+ D_k =(N_\b + D_k)+ C_k =$$
$$ =(N_\b \oplus V_k) + X_k + Y_k = N_\b + V_k + Y_k .$$ 
We claim that the last sum is actually a direct sum, and prove this by comparing ranks. If we denote the ranks of $Y_k, Y_{k+1}, V_k$ by $r, s, t$, respectively, then these are  also the ranks of $C_k, C_{k+1}, D_k$ modulo $N_\b$, so from $C_{k+1} = C_k \oplus D_k$ we obtain $s \ge r+t$.  This suffices to conclude that $N_\b + V_k + Y_k = N_\b \oplus V_k \oplus Y_k$,
which implies that $Y_{k+1} \equiv   Y_k \oplus V_k$ mod $N_\b$, a desired.  The proof can be finished by the same argument as in the proof of Lemma \ref{cfd}.
\end{proof}


  \section {A main result} \medskip

We are  now  prepared for the proof of a main result (a somewhat weaker form was included in the Ph.D. thesis of the second author   \cite{M}). It generalizes the Pontryagin-Hill theorem from free abelian groups to homogeneous \cd\ modules over $h$-local \Pds.

\begin{theorem} \label{mainth} Let  $R$ be an $h$-local \Pd, and 
$M$ a \tf\  $R$-module that is  the union of a countable ascending chain  
$ (2)$  of submodules such that, for every $n < \w$,

${1^\circ}.$    $M_n$ is a homogeneous \cd\ $R$-module of fixed type $\t$;

${2^\circ}.$   $M_n$ is an RD*-submodule of $M_{n+1}$.

\nin Then $M$ is \cd\ of type $\t$.  \end{theorem}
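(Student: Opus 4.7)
The plan is to deduce the theorem from Theorem~\ref{fd}: first verify condition~$(^*)$ of Lemma~\ref{cfd} in the present setting, then upgrade the resulting \fd\ decomposition of $M$ to a \cd\ decomposition of type $\t$ by applying Olberding's theorem to each finite-rank piece.

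\emph{Verifying}~$(^*)$. Let $S \subseteq M$ be a finite set, and let $C$ denote the RD-submodule of $M$ generated by $S$; then $\rk C = |S|$ and $C \le M_n$ for some $n$. Since $M_n$ is RD in $M$ (RD* being \emph{a fortiori} RD, and the RD-property being transitive along the chain $M_n \le M_{n+1} \le \cdots$), $C$ is RD in each $M_m$ with $m \ge n$; conversely, any RD-submodule of $M_m$ containing $S$ is RD in $M$ by the same transitivity, so by the minimality of the RD-hull, $C$ coincides with the RD-hull of $S$ inside each such $M_m$. Now, since $M_m$ is homogeneous \cd\ of type $\t$, choose a finite direct sum $F$ of rank-$1$ summands from the fixed \cd\ decomposition of $M_m$ that contains $S$. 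Then $F$ is a \fr\ \cd\ summand of $M_m$ of type $\t$, hence an RD-submodule of $M_m$ containing $S$, so $C \le F$. The RD-version of Olberding's Theorem~\ref{finite} makes $C$ a summand of $F$, and hence of $M_m$; moreover $C$ is itself \cd\ of type $\t$. This establishes~$(^*)$, with the additional feature that the witness $C$ is always \cd\ of type $\t$.

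\emph{Applying Theorem~\ref{fd}.} Each $M_n$ is \cd, \emph{a fortiori} \fd, and the RD*-hypothesis is assumed in $2^\circ$. By $(^*)$ and Theorem~\ref{fd}, the union $M$ is \fd, say $M = \bigoplus_i E_i$ with each $E_i$ of \fr.

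\emph{Upgrading to \cd\ of type $\t$.} Each summand $E_i$ is RD in $M$ and of \fr; picking a finite maximal independent set $S_i$ in $E_i$, one checks that $E_i$ equals the RD-hull of $S_i$ in $M$. The argument of the first step, applied to $S_i$, then produces $E_i$ as a \cd\ summand of type $\t$ of a \fr\ \cd\ summand of some $M_n$; in particular $E_i$ is \cd\ of type $\t$. Consequently $M = \bigoplus_i E_i$ is a direct sum of rank-$1$ modules of type $\t$, i.e.\ $M$ is \cd\ of type $\t$, as desired.

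\emph{Main obstacle.} The substance of the argument lies in the first step: one must verify that the RD-hull of a finite set in $M$ agrees with the RD-hull taken inside each $M_m$ that contains it (so that Olberding's theorem applies inside a \fr\ \cd\ summand of $M_m$), and one must remember that the witness inherits the type $\t$ from the ambient \cd\ summand. After $(^*)$ has been secured in this strong form, the upgrade from \fd\ to \cd\ of type $\t$ is a straightforward repetition of the same mechanism on each summand $E_i$ of the decomposition provided by Theorem~\ref{fd}.
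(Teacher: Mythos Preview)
Your argument is correct and follows the paper's own route: verify $(^*)$ via Olberding's Theorem~\ref{finite}, invoke Theorem~\ref{fd} to obtain $M$ finitely decomposable, and then note that each finite-rank summand, being an RD-submodule of some homogeneous $M_n$, is itself \cd\ of type~$\t$. The paper compresses this into two lines (writing only ``our claim is immediate'' after citing Theorems~\ref{fd} and~\ref{finite}), so your explicit upgrade from \fd\ to \cd\ is if anything more complete; one small slip: $\rk C \le |S|$, not necessarily equality.
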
  

\nin \begin{proof}  Condition (a) of Theorem \ref{fd} is satisfied by assumption ${1^\circ}.$ The stated necessary and sufficient condition (*) in this quoted theorem holds because of Theorem \ref{finite}, so our claim is immediate. 
\end{proof}
 
The following example will show that Theorem \ref{mainth} fails even for abelian groups if the condition of homogeneity is dropped.  We use the symbol $\vZ/p_1^\infty \dots p_k^\infty$ to denote the set of all rational numbers in whose denominators only powers of the primes $p_1, \dots, p_k$ occur.

\begin{example}  \rm Let $p_1, p_2, \dots, p_n, \dots$ be a list of distinct primes. Define
$$ A_0=\vZ, \  A_1 = \vZ/p_1^\infty \oplus \vZ/p_2^\infty , \ A_2 = \vZ/p_1^\infty p_3^\infty \oplus  \vZ/p_1^\infty p_4^\infty \oplus \vZ/p_2^\infty p_3^\infty \oplus  \vZ/p_2^\infty p_4^\infty, \dots$$ 
where from $A_{n-1}$ we pass to $A_n$ by replacing each summand by two copies of the direct sum of the summand  after adjoining to the denominators one of $p^\infty$ for the next two primes $p$ in the list.  In this way we get an ascending  chain $0 < A_1 < A_2 < \dots < A_n < \dots$ of \cd\ abelian groups if we use the diagonal embeddings (e.g. $A_1 \to A_2$ is induced by identifying $1 \in  \vZ/p_1^\infty $ with $(1,1) \in  \vZ/p_1^\infty p_3^\infty \oplus  \vZ/p_1^\infty p_4^\infty $ and $1 \in  \vZ/p_2^\infty $ with $(1,1) \in  \vZ/p_2^\infty p_3^\infty \oplus  \vZ/p_2^\infty p_4^\infty $).  Then each $A_n$ will be a pure subgroup in the following group in the chain. In order to justify our claim that the union $A = \cup_{n< \w} A_n$ is not \cd, assume by way of contradiction that $A$ is \cd\ and $J$ is a rank one summand of $A$.  Then $J$ is also a summand in the first link $A_m$ of the chain in which it is contained. The rank 1 summands of $A_m$ are fully invariant in $A_m$, so $J$ must be one of the summands in the given decomposition of $A_m$.  Manifestly,  $J$ has to be a summand in $A_{m+1}$ as well, but the construction shows that this is not the case. Thus $A$ cannot be \cd.   \end{example}
\smallskip

  Finally, we would like to apply our results to projective modules over integral domains $R$.
  
    We consider  the  case when the projective modules over $R$ are \fd. It is generally known that projective modules are direct sums of countably generated modules. Over a domain they are \fd\ if and only if they are direct sums of \fg\ modules. Rings over which the projective modules are direct sums of \fg\ modules are characterized by McGovern-Puninski-Rothberg \cite{MPR} for all associative rings. The integral domains for which this holds include all \Pds. 
  
  \begin{theorem} Assume that  projective modules over the integral domain $R$ are direct sums of \fg\ submodules. Then the union of a countable ascending chain $(2)$ of projective $R$-modules $M_n$ subject to condition {\rm (b)} is again projective if and only if condition  $(^*)$ of Theorem $\ref{fd}$ holds. \qedsymbol
  \end{theorem}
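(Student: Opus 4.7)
The plan is to reduce the statement to Theorem \ref{fd} and to check one extra point: that the finite rank pieces produced by the construction there are themselves projective.

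First, I would observe that the hypothesis on $R$ forces every projective $R$-module to be \fd. Indeed, a projective module is by assumption a direct sum of \fg\ submodules, and a \fg\ projective module over an integral domain is \tf\ and of \fr. In particular each $M_n$ is \fd, so hypothesis (a) of Theorem \ref{fd} is met, while hypothesis (b) of Theorem \ref{fd} is exactly condition (b) of the present statement. Thus the chain $(2)$ satisfies the standing assumptions of Theorem \ref{fd}.

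The necessity of condition $(^*)$ is then immediate: if $M$ is projective, then by the same argument $M$ is \fd, so Theorem \ref{fd} (necessity part) yields $(^*)$.

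For the sufficiency of $(^*)$, Theorem \ref{fd} guarantees that $M$ is \fd, i.e. a direct sum of \fr\ submodules; but in itself this does not give projectivity. To upgrade \fd\ to projective, I would revisit the inductive construction in the proofs of Lemma \ref{cfd} and Theorem \ref{fd}. In Lemma \ref{cfd} the countable rank case is handled by writing $M$ as the direct sum of $C_0$ and complements $B_{n+1}$, where $C_{n+1}=C_n\oplus B_{n+1}$ and each $C_k$ is selected, via $(^*)$, to be a summand of $M_{i_k}$; since $M_{i_k}$ is projective, $C_k$ is projective, and hence so is each $B_{n+1}$. The transfinite extension in Theorem \ref{fd} assembles $M$ from factors $N_{\a+1}/N_\a$ constructed in exactly this way from the $C_k$'s, so these factors are also direct sums of summands of projective $M_m$'s, and hence projective. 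Because $N_\a$ is a summand in $N_{\a+1}$ and $N_{\a+1}/N_\a$ is projective, the chain $(3)$ splits at every step, and $M$ is a direct sum of projective modules, hence projective.

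The only nontrivial point is the bookkeeping that the \fr\ building blocks in the proof of Theorem \ref{fd} are genuinely summands of the $M_m$'s and not merely summands of the ambient union $M$; this is ensured by clause $(\b)$ in the construction (and by condition $(\d)$ on $C_k\cap M_{i_k}\in\BB_{i_k}$ in the transfinite step), so no new argument beyond tracing through the existing proof is required.
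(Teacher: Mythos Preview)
The paper records this theorem with a bare \qedsymbol, so there is no proof on its side to compare against. Your observation that Theorem~\ref{fd} by itself only yields finite decomposability, and that projectivity of $M$ needs a further check, is correct; your way of supplying it---tracing the construction in Lemma~\ref{cfd} and Theorem~\ref{fd} and noting that the building blocks $C_k$, and hence the successive complements $B_{k+1}$, $Y_k$, $V_k$, are summands of the projective modules $M_{i_k}$---is sound.

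A slightly shorter route avoids reopening those proofs. Once Theorem~\ref{fd} gives $M=\bigoplus_i A_i$ with each $A_i$ of finite rank, apply $(^*)$ to a maximal independent set of a fixed $A_i$: the resulting $C$ is a summand of some $M_m$, hence projective and RD in $M$, so $A_i\le C$; writing $M=A_i\oplus A_i'$, the modular law gives $C=A_i\oplus(C\cap A_i')$, whence $A_i$ is projective. Either argument justifies the paper's \qedsymbol.
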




\begin{thebibliography}{99} \medskip

   \bibitem{B}  R. Baer,  {\it Abelian groups without elements of finite order,}  Duke Math. J. {\bf 3} (1937), 68-122.
        
      \bibitem{C}  T. Chao,   {\it Ultrabalanced subgroups of \tf\ abelian groups},      Ph.D. Thesis, Tulane University, 1994.
     
 \bibitem{Co}  A.L.S. Corner,   {\it  A note on rank and direct decompositions of torsion-free abelian groups. II, }  Proc. Cambridge Philos. Soc. {\bf 66} (1969), 239-240.      
 
     \bibitem{DR}    M. Dugas and K.M. Rangaswamy,  {\it Separable pure subgroups of \cd\ \tf\ abelian groups,}  Arch. Math. {\bf 58} (1992), 332-337. 
 
 \bibitem{F}    L. Fuchs, {\it Infinite Abelian Groups,} Vol 2  (Academic Press, 1973).   
   
   \bibitem{FH}    L. Fuchs and  P. Hill,  {\it The balanced-projective dimension of abelian $p$-groups, }Trans. Amer. Math. Soc. {\bf 293} (1986), 99-112.  

 \bibitem{FS}    L. Fuchs and  L. Salce,  {\it Modules over non-Noetherian Domains,}  Math. Surveys and Monographs {\bf 84}  (Amer. Math. Society, Providence,  2001).
 
   \bibitem{FV}    L. Fuchs and  G. Viljoen,  {\it  Completely decomposable pure subgroups of \cd\  abelian  groups,} Rend. Sem. Mat. Univ. Padova {\bf 92} (1994), 63-69.  

\bibitem{G}    P. Goeters,  {\it When summands of \cd\ modules are \cd,} Comm. Algebra {\bf 35} (2007), 1956-1970.

\bibitem{H} P. Hill,  {\it On the freeness of abelian groups: a generalization of Pontryagin's theorem}  Bull. Amer. Math. Soc. {\bf 76} (1970), 1118-20.

\bibitem{K} I. Kaplansky,   {\it Projective modules,} Ann. Math. {\bf 68} (1938), 372-377.

\bibitem{Ko}  G. Kolettis, Jr.   {\it Homogeneously decomposable modules,} in: {\it Studies in Abelian Groups} (Paris, 1968), 223-238. 

\bibitem{M} J.E. Macias Diaz,  {\it Projectivity and complete decomposability of modules over domains,} Ph.D. Thesis (Tulane University, 2001). 

\bibitem{MPR} W.W. McGovern, G. Puninski and P. Rothmaler,  {\it When every projective module is a direct sum of finitely generated modules,} J. Algebra {\bf 31} (2007), 454-481. 

 \bibitem{O}    B. Olberding,   {\it \Pd s and pure submodules of direct sums of ideals,} Mathematika {\bf 46} (1999), 425-432.   

 \bibitem{R}  K.M. Rangaswamy,  {\it A criterion for complete decomposability and Butler modules over \vd s,}  J. Algebra {\bf 205} (1988), 105-118.

\end{thebibliography}
\end{document}